\newtheorem*{thm1}{Theorem}
\newtheorem{thm}{Theorem}[section]
\newtheorem{lem}[thm]{Lemma}
\newtheorem{prop}[thm]{Proposition}
\newtheorem{rem}[thm]{Remark}
\def\Q{\mathbb{Q}}
\def\F{\mathbb{F}}
\def\R{\mathbb{R}}
\def\Z{\mathbb{Z}}
\def\A{\mathbb{A}}
\begin{document}

\title{$p$-adic level raising on the  eigenvariety for $U(3)$}
\date{}
\author{Ruishen Zhao}
\address{Morningside Center of Mathematics\\
	Academy of Mathematics and Systems Science\\
	Chinese Academy of Sciences\\
	No. 55, Zhongguancun East Road\\
	Beijing 100190, China}
\email{zrs13@tsinghua.org.cn}	
\renewcommand\thefootnote{}
\footnotetext{2020 Mathematics Subject Classification. Primary: 11E95; Secondary: 14G22.}

\renewcommand{\thefootnote}{\arabic{footnote}}

\maketitle

\textbf{Abstract}.
We prove  level raising results for $p$-adic automorphic forms on  definite unitary groups $U(3)/\mathbb{Q}$ and deduce some intersection points on the eigenvariety. Let $l$ be an inert prime where the level subgroups varies, if there is a non-very-Eisenstein point $\phi$ on the old component (generically parametrizing forms old at $l$) satisfying $T_{l}(\phi)=l(l^3+1)$, then this point  also lies in the new component (generically parametrizing forms new at $l$). This provides a $p$-adic analogue of Bella{\"i}che and Graftieaux's mod $p$ level raising  for classical automorphic forms on $U(3)$, and also generalizes  James Newton's    $p$-adic level raising results for definite quaternion algebras. Key ingredients include  abelian Ihara lemma (proved for any definite unitary group $U(n)$) and some duality arguments about certain Hecke modules. Finally we  also discuss some methods to construct such points explicitly and further development.

\setcounter{tocdepth}{1}
\tableofcontents


\section{Introduction}

Classical level raising results are about mod $p$ congruences  between cuspidal modular forms with different level at $l$ ($l \neq p$). A seminal example is  the following theorem proved by Ken Ribet (see  \cite{ribet1983congruence}):

\begin{thm1}

Let  $f \in S_{2}(\Gamma_0(N))$ be a normalized eigenform with level $N$, and let $\mathfrak{p}|p$ be a finite place of  $\overline{\Q}$ with $p>3$ and $f$ modulo $\mathfrak{p}$ is not congruent to an Eisenstein series. If $l \nmid Np$ is a prime with the following (level raising) condition:
\[a_l(f)^2\equiv (l+1)^2 \ (mod \  \mathfrak{p}),\]
then there exists an $l$-new eigenform  $g \in S_2(\Gamma_0(Nl))$ congruent to $f$ modulo $\mathfrak{p}$.

\end{thm1}

Here two eigenforms $f_1$ and $f_2$ are congruent modulo $\mathfrak{p}$ means that for all but finite many primes $q$, their Hecke eigenvalues are congruent, i.e. $a_{q}(f_1) \equiv a_{q}(f_2)$ after modulo $\mathfrak{p}$.

 To elaborate further on  the level raising condition, let $\pi$ denote the corresponding  automorphic representation of $GL_2(\A_{\Q})$, and let $\pi_l$ denote the $l$-part of $\pi$, which is an irreducible representation of $GL_2(\Q_l)$. Since $\pi$ is cuspidal and $\pi_l$ is unramified. Such $\pi_l$ is uniquely determined by the Satake parameter (and vice versa). A Satake parameter is called \textit{degenerate} if and only if the corresponding full principal series is reducible. For  $GL_2(\Q_l)$, this occurs exactly degenerates when  the ratio of the Satake parameter is $l$ or $l^{-1}$. Consequently, the level raising condition is equivalent to requiring that the Satake parameter is congruent modulo $p$ to a degenerate Satake parameter.

After Ribet's work, there are numerous applications and generalizations of level raising results. For instance, Andrew Wiles employed level raising results in his significant paper \cite{wiles1995} on the modularity of elliptic curves. In \cite{taylor1989galois}, Richard Taylor generalized Ribet's result to definite quaternion algebras. In \cite{bellaiche2006u3}, Jo{\"e}l Bella{\"i}che and Phillippe Graftieaux  investigated level raising congruences for definite unitary groups $U(3)$.

On the other hand, James Newton developed $p$-adic analogues of level raising results. He first considered $p$-adic automorphic forms on  definite quaternion algebras over $\Q$ in \cite{newton2011geometric}, and later  generalized those results to definite quaternion algebras over totally real number fields in \cite{newton2016level}. His approach follows the general framework of \cite{diamond1994non} and \cite{taylor1989galois} in the classical setting,  but  new phenomena arise in the context of $p$-adic setting. For instance, the spaces of relevant classical automorphic forms are finite dimensional while the $p$-adic counterparts are infinite dimensional. To address this discrepancy, James Newton introduced '\textbf{dual}' space of $p$-adic forms and established $p$-adic analogue of key duality results. Unlike classical case, Newton's Ihara lemma shows an interesting asymmetry between the usual $p$-adic space and the dual space (see lemma 7 of \cite{newton2011geometric} and section 2.10 of \cite{newton2016level}). Moreover, his level raising results implies some intersection points between old and new components of the eigenvariety. These resulting points are \textbf{non-classical} due to level raising conditions. 

In this paper, we generalize Newton's results to $p$-adic forms on  definite unitary groups $U(3)/\Q$. It  also provides a $p$-adic analogue of classical level raising results in \cite{bellaiche2006u3}. We follow the general framework of Newton. But there are some new issues in $U(3)$ setting. For example, over $\Q_l$, the group $U(3)$ has two conjugacy classes of maximal open compact subgroups, which leads to new features about unramified principal series (compare to $GL(2)$ cases). Moreover, there are endoscopy phenomena  for $U(3)$. We propose some new ideas to overcome these difficulties.

Now we quickly set-up some notations to state the strategy and describe the main theorem  (i.e. theorem \ref{main}).

Let $E$ denote an imaginary quadratic field and let $G$ denote a  unitary group $U(3)$ over $\Q$ associated with an $E$-hermitian space and $G(\R)$ is compact. Fix a prime $p$ that splits in $E$, which further induces an isomorphism between the $p$-adic reductive group $G(\Q_p)$ and $GL_{3}(\Q_p)$. Let $l$ be an odd prime  \textbf{inert} in $E$ and $G$ is unramified at $l$. For any level subgroup $\mathcal{U}$ of $G(\A_{f})$, we fix the wild level $\mathcal{U}_{p}$ as the Iwahori subgroup $Iw_{p}$ of $GL_{3}(\Q_p)$. In this paper, we focus on three level subgroups $\mathcal{U}_{0}$, $\mathcal{U}_{1}$ and $\mathcal{V}=\mathcal{U}_0 \cap \mathcal{U}_1$. They only differ at $l$-component and can be illustrated by the Bruhat-Tits building for $G(\Q_l)$, which is a  bi-homogeneous tree. More specifically:

$\bullet$ The group $\mathcal{U}_{0,l}$ is a hyperspecial subgroup of $G(\Q_l)$, corresponding to a hyperspecial vertex in the tree;

 $\bullet$ The group  $\mathcal{U}_{1,l}$ is another kind of maximal open compact subgroup of $G(\Q_l)$, corresponding to an adjacent special vertex;

$\bullet$ The group $\mathcal{V}_{l}=\mathcal{U}_{0,l} \cap \mathcal{U}_{1,l} $ is the resulting Iwahori subgroup.

Let $\mathcal{W}$ denote the weight space. For an admissible open irreducible affinoid $X \hookrightarrow \mathcal{W}$, we investigate three Hecke modules \[L_0=\mathcal{S}_X(\mathcal{U}_0,r)^{Q}, \  L_1=\mathcal{S}_X(\mathcal{U}_1,r)^{Q} \  and \ M=\mathcal{S}_X(\mathcal{V},r)^{Q}.\] Here $r=p^{-m}$ ($m \in \mathbb{N}_{+}$) is the convergent radius  and $Q$ is a (slope-truncation) polynomial related with Fredholm theory of Hecke operators at $p$. And $\mathcal{S}_X(\mathcal{V},r)$ is the space of $p$-adic forms on $G(\A_f)$ with weight $X$, level $\mathcal{V}$ and locally $r$-analytic. The Hecke module $M$ is a direct summand of it cutting out by slope decomposition. Other Hecke module $L_0$ and $L_1$ have similar meaning. There are two kinds of natural maps:

$\bullet$ A \textit{level raising} map  $i:L_0\oplus L_1 \longrightarrow M$, whose image $im(i)$ is the space of $p$-adic forms \textit{old} at $l$.

$\bullet$ A  \textit{level lowering} map $i^+:M\longrightarrow L_0\oplus L_1$,  whose  kernel $ker(i^+)$  is the space of  $p$-adic forms \textit{new} at $l$.

The level raising problem can be interpreted as  comparing the support (over related tame Hecke algebras) between modules of old forms and new forms.


We go through a similar routine by James Newton and make a comparison with his ideas. Key ingredients are similar, Ihara lemma and some duality arguments, while new features appear for definite unitary groups. James Newton works with definite quaternion algebras,  the group is  $GL(2)$ over the place $l$. Its Bruhat-Tits tree is homogeneous, thus simpler than our setting. As a result, he mainly dealt with two kinds of level subgroups, $\mathcal{U}$ (hyperspecial at $l$) and $\mathcal{V}$ (Iwahori at $l$). And for old forms, he only needed to consider modules like $L_0^2$. But we need to consider three level subgroups and consider $L_0\oplus L_1$ instead of $L_0^2$. This is a little different from the definition of old forms in classical setting of \cite{bellaiche2006u3}. As a result, our level changing matrix is also different from classical case (see section \ref{oldandnew}). In the last part of this paper (section \ref{localpicture}), we explain some (local) intuition why it is better to use $L_0 \oplus L_1$ instead of $L_0^2$. To study level raising problems, we also introduce dual modules and consider a natural pairing. To apply duality argument, we need to first verify that $ker(i^+i)=0$. Newton deduced such injectivity by Ramanujan-Petersson conjecture for cuspidal Hilbert modular forms (see  proposition 2.13 in \cite{newton2016level}). In our setting, because there are some components of eigenvarieties for $U(3)$ coming from endoscopy, we can't apply Ramanujan conjecture to such components (generically parameterize endoscopic forms, thus not cuspidal). Instead, we first deduce a kind of Ihara lemma. We call it \textbf{abelian Ihara lemma} (theorem \ref{ihara}). In fact, this lemma holds for a general class of reductive groups including any definite unitary group $U(n)$ ($n\geq2$), see section \ref{ihara} for more details. This Ihara lemma shows  strict restrictions  for abelian $p$-adic forms. Combine it with Zariski density of classical points and certain ('semisimple') property of classical forms, we get the desired injectivity of $i^+i$. Then we apply similar duality arguments to study $ker(i^+)$. Apply abelian Ihara lemma again, we deduce $p$-adic level results. And it implies some intersection points between old components and new components inside the eigenvariety.

The following theorem (see theorem \ref{main}) is our main theorem:

\begin{thm1}
Let $\mathcal{E}(\mathcal{U}_0)$ denote the eigenvariety for $G$ with level $\mathcal{U}_0$. Suppose we have a point $\phi$ on it which is not very Eisenstein and $T_l(\phi)=l(l^3+1)$. Then the corresponding point inside the old component $\mathcal{E}(\mathcal{V})^{old}$ also lies in the new component $\mathcal{E}(\mathcal{V})^{new}$.

\end{thm1}

After that we discuss how to construct such points. In \cite{newton2016level}, Newton constructed such points by some explicit computations about Hida families. Because $GL(2)$ is also closely related with $U(2)$, we can try to first transfer Newton's points to  $U(2)$ eigenvariety. Then use $p$-adic Langlands functoriality (symmetric square) to get such points on $U(3)$ eigenvariety. These points are not classical, thus we can't apply symmetric power functoriality for classical forms (e.g. \cite{nt2021symmetric1} and \cite{nt2021symmetric2}) directly. We need to do $p$-adic interpolation for such functoriality (see \cite{hansen2017universal}). Finally, we discuss further development. The abelian Ihara lemma should work for any   reductive group $G$ over $\Q$ with $G(\R)$ being compact and $G^{der}$ being simply connected. The main theorem should also work for such group  if further $G(\Q_l)$ has (reduced) rank one. For higher rank, such level raising problems become much more difficult. In fact,  in classical setting, Clozel, Harris and Taylor proposed a conjecture about generalizations of Ihara lemma to definite unitary groups $U(n)$ (over \textbf{split} primes) in \cite{clozel2008automorphy}. This is still open when $n>2$. The $p$-adic analogue seems harder. Moreover, in higher rank, there are more kinds of components (besides 'new'  and 'old') inside the eigenvariety. Here we instead discuss local analogues for $GL(n)$ about intersection points on the moduli space of tame $L$-parameters. We also point our some new features for other groups (like $SL(2)$ and $U(3)$). Although the local analogue is not needed to prove results in global setting, the reader may read it (section \ref{localpicture}) first to get better geometric motivations.

We briefly describe the structure of this paper. In section \ref{notion}, we introduce basic notions of $p$-adic overconvergent forms and its dual module on definite unitary groups $U(n)$. In section \ref{sectionihara}, we prove  the abelian Ihara lemma. Then we turn to $n=3$ case. In section \ref{sectionlevelraising}, we introduce the natural pairing. After introducing old and new forms, we get some duality results. Finally we deduce a kind of $p$-adic  level raising result. In the last section \ref{sectioneigenvariety}, we use these results to deduce the main theorem concerning intersection of irreducible components on $U(3)$ eigenvariety. After that, we propose a method to construct such intersection points and discuss further development. Then we develop some local analogues for $GL(n)$ and discuss some new features for other groups.

\textbf{Acknowledgments} First I want to thank Yiqin He for his help. I'm also grateful to James Newton and Jack Thorne for some useful conversations. Besides, I thank Yichao Tian and Zhixiang Wu. Finally, this paper is  dedicated to Jo{\"e}l Bella{\"i}che. I learned eigenvarieties from his wonderful lecture notes. It is really a pity that I don't have a chance to discuss these things with him.

\section{$p$-adic forms on the definite unitary groups}
\label{notion}

\subsection{basic notations}

Let $E$ be an imaginary quadratic field extension of $\Q$ and $D$ denote the central simple $E$-algebra $M_{n}(E)$ (here $n\geq 2$). Take an involution of $D$, $x\mapsto x^*$, extending the nontrivial automorphism $\sigma$ of $E$ over $\Q$ (for example, we can take it to the adjunction with respect to a non-degenerate Hermitian form). Let $G/\Q$ denote the unitary group whose $R$-points (for any $\Q$-algebra $R$) are \[G(R)=\{x \in D\otimes_{\Q}R|x x^*=1\}.\] Then if a prime $p$ is split in $E$, we have $G(\Q_p)\cong GL_{n}(\Q_p)$ and $G(\R)\cong U_{s,t}(\R)$. From now on, we fix a split prime $p$ and further assume the signature $(s,t)$ is $(0,n)$ or $(n,0)$. In particular, $G(\R)$ is compact and we call such a group $G$ \textit{definite unitary group}.

  Let $\A$  denote $\A_{\Q}$, $\A_{f}$ denote the finite adeles of $\A$, and $\A_{f}^p$ denote the finite adeles trivial at $p$. Let $\mathcal{U}$ be a compact open subgroup of $G(\A_f)$ of the form $\mathcal{U}_{p}\times \mathcal{U}^{p}$, where $\mathcal{U}_p$ is a compact open subgroup of $G(\Q_p)$ (\textit{wild level}) and $\mathcal{U}^p$ is a compact open subgroup of $G(\A^p)$ (\textit{tame level}). Then for any commutative ring $R$ and any \textbf{right} $\mathcal{U}_p$-module $A$ over $R$, we can define an $R$-module ($A$-valued automorphic forms)  $\mathcal{F}(\mathcal{U},A)$ in the following way:

\[\mathcal{F}(\mathcal{U},A)=\{f:G(\Q)\backslash G(\A_f)\longrightarrow A,\ f(gu)=f(g)u_p \ for\  all\ u\in \mathcal{U}\}.\]

Here we follow the convention of James Newton (see \cite{newton2011geometric} and \cite{newton2016level}) and mainly use \textbf{right} action.  This convention is slight different from others, like \cite{bc2009family}, where they mainly used left action.

For any function $f:G(\A_f)\longrightarrow A$ and $x \in G(\A_f)$ with $x_p \in \mathcal{U}_p$, we define  a new function  $f|x:G(\A_f)\longrightarrow A$ by \[(f|x)(g)=f(gx^{-1})x_p.\] Then we can also write the above module as \[\mathcal{F}(\mathcal{U},A)=\{f:G(\Q)\backslash G(\A_f)\longrightarrow A,\ f|{u}=f \ for \ all \ u \in \mathcal{U}\}.\] By generalized finiteness of class groups (e.g. see \cite{borel1963some}), the double coset $G(\Q)\backslash G(\A_f)/\mathcal{U}$ is finite. Pick up a set of representatives $\{x_i|1\leq i\leq h\}$ for this double coset, we have the following isomorphism \[\mathcal{F}(\mathcal{U},A)\longrightarrow \bigoplus_{i=1}^{h}A^{x_i^{-1}G(\Q)x_i \cap \mathcal{U}},\] \[f\mapsto (f(x_1),...,f(x_h)).\] Moreover, each $x_i^{-1}G(\Q)x_i \cap \mathcal{U}$ is a finite group, and it is trivial if the tame level $\mathcal{U}^p$ is small enough. For example, see proposition 4.1.1 of \cite{chenevier2004familles}. From now on, we assume that the tame level $\mathcal{U}^p$ is small enough (\textit{neat}). Then $\mathcal{F}(\mathcal{U},A)\cong A^{h}$. Although in fact such neat assumption can be removed, it will simplify some computations (such as verifying adjoint property of Hecke operators under the pairing in section \ref{sectionpairing}).

We can define some double coset operators (\textit{Hecke operators}) on this module. Here for simplicity we illustrate \textit{tame} Hecke operators. Let $\mathcal{U}_0$ and $\mathcal{U}_1$ denote two level subgroups of $G(\A_f)$ with the same wild level subgroup. For any $x \in G(\A_f)$ with $x_p=1$, we can define an $R$-linear map \[[\mathcal{U}_0x\mathcal{U}_1]:\mathcal{F}(G,\mathcal{U}_0)\longrightarrow \mathcal{F}(G,\mathcal{U}_1)\] as follow: first decompose $\mathcal{U}_0x\mathcal{U}_1$ into a finite disjoint union $\coprod_{i} \mathcal{U}_0g_i$ and define \[f|[\mathcal{U}_0x\mathcal{U}_1]=\sum_i f|g_i.\] In particular, if $\mathcal{U}_0=\mathcal{U}_1$, this module endows a right action by tame Hecke algebras. It is more subtle to define Hecke actions at $p$.  We should be care about the range of $x_p$, because usually the module $A$ doesn't have an action by the whole group $G(\Q_p)$. Instead we will restrict to elements inside a monoid of $G(\Q_p)$. Then the double coset operators acts on this module in the same way. As this paper is mainly about tame information of $p$-adic forms, we refer to chapter 7 of \cite{bc2009family} for more details about Hecke operators at $p$.

Now we turn to discuss wild level. Recall that $p$ splits in $E$, therefore $G(\Q_p)\cong GL_n(\Q_p)$. We fix such an isomorphism. Write $B$ and $\overline{B}$ for the upper and lower triangular Borel subgroups respectively, $N$ and $\overline{N}$ for the upper and lower unipotent subgroup of $GL_n$ respectively, and $T$ for the diagonal torus. Moreover, let $Iw_p$ denote the Iwahori subgroup of $GL_n(\Q_p)$ defined by such $B$ and $GL_n(\Z_p)$, i.e. $Iw_p$ is the subgroup of $GL_n(\Z_p)$ that becomes the upper triangular Borel subgroup after modulo $p$.  From now on, we fix the wild level $\mathcal{U}_p$ as $Iw_p$.

To define $p$-adic   automorphic forms, we need to construct suitable $Iw_p$-module first. It is usually constructed by certain induction methods.  We will use the following notations for induction:

If $B_1\subset H_1$ are groups, $R$ is a commutative ring, and $\chi:B_1\longrightarrow R^*$ is a character, let
\[Ind_{B_1}^{H_1}\chi=\{f:H\longrightarrow R| f(hb)=f(h)\chi(b)\ for \ all \ h\in H_1,\ b\in B_1\}.\] What's more, if $Pro$ is a property for some functions $f \in Ind_{B_1}^{H_1}\chi$ that is invariant under left translation by $H_1$, then let \[Ind_{B_1}^{H_1,Pro}\chi=\{f \in Ind_{B_1}^{H_1}\chi | f \ has \ property \ Pro\}.\] Then $Ind_{B_1}^{H_1,Pro}$ is an $R$-module with a right action of $H_1$ given by left translations, i.e. $(f.h)(x)=f(hx)$ for all $h,x\in H_1$.

For example, let $R$ denote a $p$-adic field, $\chi=(t_1,...,t_n)\in \Z^n$ and we write $diag(d_1,..,d_n)$ for the diagonal matrix, we can interpret $\chi$ as the character of the diagonal  torus $T(R)$ mapping $diag(d_1,...,d_n)$ to $\prod_{i}d_i^{t_i}$ and thus also view it as the character of the upper triangular Borel subgroup $B(R)$ by reducing to $T(R)$ and applying $\chi$. Assume $t_1\geq t_2...\geq t_n$, the $R$-vector space \[Ind_{B(R)}^{GL_n(R),alg}\chi,\] where $alg$ means \textit{algebraic}, is the irreducible algebraic representation of $GL_n$ over $R$ with highest weight $\chi$. Let $w_0$ denote the longest element of the Weyl group, then \[\mathcal{F}(\mathcal{U},Ind_{B(R)}^{GL_n(R),alg}w_0(-\chi))\] is the space of classical (algebraic) automorphic forms on $G$ of weight $\chi$ and level $U$ with coefficients in $R$. See section 7.3.5 of \cite{bc2009family} for more details.

\subsection{weight space}

A weight $\chi$ is a $p$-adic continuous character of $T(\Z_p)\cong(\Z_p^*)^n$. Or Equivalently, we can write $\chi$  as $(\chi_1,...,\chi_n)$, each $\chi_i$ is a $p$-adic continuous character of $\Z_p^{*}$, and $\chi$ is defined by sending $diag(a_1,...,a_n)$ to $\prod_{i}\chi_{i}(a_i)$. Similar to the algebraic example, we can also view such a weight as a character of $B(\Z_p)$ by reducing to $T(\Z_p)$. The weight space $\mathcal{W}$ is the rigid analytic space over $\Q_p$  such that for any $\Q_p$-affinoid algebra $R$, $\mathcal{W}(R)$ is the set of continuous characters $(\Z_p^*)^n\longrightarrow R^*$. Let $\triangle=((\Z/p)^*)^{n}$ and we have \[(\Z_p^*)^{n}\cong \triangle \times (1+pZ_p)^n.\] Thus any $R$-point of $\mathcal{W}$ is determined by a character of $\triangle$ and a character of $(1+pZ_p)^n$. In geometry, the weight space  is a finite disjoint union of $n$-dimensional open unit polydiscs (or called balls).

For any $\Q_p$-affinoid algebra $R$, and a continuous character $\chi_1:\Z_p^*\longrightarrow R^*$, it is called locally $r$-analytic (here $r=p^{-m}$ with $m\in \mathbb{N}_{+}$), if its restriction to $1+p^m \Z_p$ can be given by a convergent power series with coefficient in $R$. For simplicity in this paper we will always work with radius $r$ in the form of $p^{-m}$. By section 7.3.3 of \cite{bc2009family}, we can always find such convergent radius $r$ for $\chi_1$. For a weight $\chi=(\chi_1,...,\chi_n)$, we call it locally $r$-analytic if each $\chi_i$ is locally $r$-analytic. Or equivalent, its restriction on $(1+p^m\Z_p)^n$  is given by a convergent power series. Such radius $r$ always exists.

For any  reduced $\Q_p$-affinoid  $X$ with a morphism $X\longrightarrow \mathcal{W}$, we use $[\cdot]_{X}$ to denote the resulting weight \[[\cdot]_X:(\Z_p^{*})^{n}\longrightarrow O(X)^{*}. \]

\subsection{overconvergent forms}
\label{sectionpadicforms}
Now we will construct suitable $Iw_p$ module to define overconvergent forms. The first step is to introduce the general space $Ind_{B(\Z_p)}^{Iw_p}[\cdot]_X$, the second step is to define locally $r$-analytic submodule $Ind_{B(\Z_p)}^{Iw_p,r-an}[\cdot]_X$ and the final step is to apply the functor $\mathcal{F}(\mathcal{U},-)$ to get the global module of locally $r$-analytic forms.

Recall that we can also view the weight $[\cdot]_X$ as a character of $B(\Z_p)$, so we can consider the following induction $Ind_{B(\Z_p)}^{Iw_p}[\cdot]_X$. Notice that we have the following isomorphism (via natural inclusions)\[\overline{N}(\Z_p)\cong Iw_{p}/B(\Z_p)\cong Iw_{p}B(\Q_p)/B(\Q_p),\] so this $O(X)$-module has extra $B(\Q_p)$ right action and we can identify it (through restriction on $\overline{N}(\Z_p)$) with the space of $O(X)$-valued functions on $\overline{N}(\Z_p)$.

Moreover, we have the following identification \[\Z_{p}^{\frac{n(n-1)}{2}}\cong \overline{N}(\Z_p),\] \[\underline{z}=(z_{i,j})\mapsto \overline{N}(\underline{z})=
\begin{pmatrix}
1 & 0 & \cdots & 0 \\
pz_{2,1} & 1 & \cdots & 0 \\
\vdots & \vdots & \vdots & \vdots \\
pz_{n,1} & \cdots & pz_{n,n-1} & 1 \\
\end{pmatrix}.\]

We put the dictionary order on the index set $Inx=\{(i,j)|1\leq j < i \leq n\}$, so we can also present $\underline{z}$ by a tuple $(z_{2,1},...,z_{n,n-1})$. For later application in Ihara lemma (section \ref{sectionihara}), we further introduce the root subgroup $R_{2,1}$: \[\Z_p \hookrightarrow \overline{N}(\Z_p),\] \[a\mapsto \overline{N}((a,0,\cdots,0)).\] It is easy to see that this is a group map, $R_{2,1}(a+b)=R_{2,1}(a) R_{2,1}(b)$. What's more, for any $\underline{z}$, set $\widetilde{z}=\underline{z}-(z_{2,1},0,\cdots,0)$, then $\overline{N}(\underline{z})=R_{2,1}(z_{2,1})N(\widetilde{z})$.

For a function $f:\overline{N}(\Z_p)\longrightarrow O(X)$, we call it locally $r$-analytic (recall $r=p^{-m}$), if for any $\underline{a}=(a_{i,j})\in \Z_{p}^{\frac{n(n-1)}{2}}$, the restriction of $f$ on \[Ball(\underline{a},r)=\{\underline{z}=(z_{i,j})\in \Z_{p}^{\frac{n(n-1)}{2}}| z_{i,j} \in a_{i,j}+p^m \Z_{p}\}\] is given by a convergent power series with coefficients in $O(X)$.

Let $r-an$ denote the property being locally $r$-analytic,  we introduce the $O(X)$ module \[A_{X,r}=Ind_{B(\Z_p)}^{Iw_p,r-an} [\cdot]_X.\] It still has the right $Iw_p$ action. But be careful, the previous $B(\Q_p)$ action may not keep the property $r-an$. Instead there exists a monoid $\mathcal{M}$ of $B(\Q_p)$ that keeps the convergent radius $r$. Therefor $A_{X,r}$ has extra $\mathcal{M}$ action. Through this monoid action, we can define suitable Hecke operators at $p$ and slope etc, which is very important for $p$-adic forms. We refer to section 7.3 of \cite{bc2009family} for more details.

To prove the abelian Ihara lemma, we explore more about $A_{X,r}$. From the definition, we have the following isomorphism \[A_{X,r}\cong \oplus_{k} O(X)\langle Z_{i,j} \rangle, \] this is a finite sum with $p^{m\frac{n(n-1)}{2}}$ components. This is done by picking up a set of representatives for $\Z_{p}^{\frac{n(n-1)}{2}}/ (p^m \Z_p)^{\frac{n(n-1)}{2}}$, and identify the closed ball of radius $r$ with the unit closed ball through rescaling. Thus as an $O(X)$-module, $A_{X,r}$ is just some copies of standard Tate algebras. For each summand, we write an element $Z \in O(X) \langle Z_{i,j} \rangle$ as \[Z=\sum_{\alpha}x_{\alpha}Z^{\alpha},\] here $\alpha=(\alpha_{i,j})$ runs over $\mathbb{N} ^{Inx}$, $Z^{\alpha}$ is short for $\prod_{i,j} Z_{i,j}^{\alpha_{i,j}}$, $x_{\alpha} \in O(X)$ and tends to $0$.

We define the space of \textit{locally} $r$-\textit{analytic} (or $r$-\textit{overconvergent}) $p$-adic automorphic forms on $G$ with level $U$ and weight $X$ as the following $O(X)$-module: \[\mathcal{S}_X(\mathcal{U},r):=\mathcal{F}(\mathcal{U},A_{X,r}).\]

\subsection{dual modules}
To study new forms, we will use some duality arguments. Follow \cite{newton2011geometric} and \cite{newton2016level}, we need to define dual module of $p$-adic forms.

For any Banach algebra $R$ and Banach $R$-module $A$, we define the \textit{dual module} $A^*$ as the Banach $R$-module of \textbf{continuous} $R$-linear maps from $A$ to $R$, with the usual operator norm. For instance, let $R$ be $\Q_p$ and $A$ be $R\langle T\rangle$, the standard Tate algebra. Then its dual $A^*$ can be identified with the module of power series with bounded coefficient $R[\langle T \rangle]=\Z_p[[T]]\otimes \Q_p$. In particular, $A$ is an ONable module (has an orthonormal basis) while $A^*$ is not (see \cite{newton2011geometric} section 2.1).

For simplicity, we denote the $O(X)$-module $A_{X,r}^*$ as $D_{X,r}$.  Notice that $A_{X,r}$ has an extra right actions by certain monoid $\mathcal{M}$ inside $B(\Q_p)$, we endow $D_{X,r}$ with right actions by $\mathcal{M}^{-1}$ through the dual action. In other words, for any $g\in \mathcal{M}$, $x \in A_{X,r}$ and $\lambda \in D_{X,r}$, we have \[(x.g,\lambda)=(x,\lambda.g^{-1}),\] here $(-,-)$ is the natural pairing.

What's more, in the previous section, we have explicit $A_{X,r}$ as a finite sum of standard Tate algebras, then correspondingly we have the following isomorphism: \[D_{X,r}\cong \oplus_{k} O\langle X\rangle [\langle C_{i,j} \rangle].\] Similarly, for each summand and an element $C$ of it, we write \[C=\sum_{\alpha}y_{\alpha}C^{\alpha},\] and the under the natural pairing we have \[(Z,C)=\sum_{\alpha}x_{\alpha} y_{\alpha}.\]

We define the dual space of locally $r$-analytic $p$-adic forms  on $G$ with level $\mathcal{U}$ and weight $X$ as the following $O(X)$-module
\[\mathcal{D}_{X}(\mathcal{U},r):=\mathcal{F}(\mathcal{U},D_{X,r}).\] Later through the natural pairing (see section \ref{sectionpairing}), we will identify $\mathcal{D}_{X}(\mathcal{U},r)$ with the $O(X)$-dual module of $\mathcal{S}_{X}(\mathcal{U},r)$.

\subsection{slope decomposition and  Hecke actions}
\label{sectionslope}
Similar to Newton's \cite{newton2016level} section 2.7, in practice we will use finite slope truncation to get some finitely generated submodules. This finiteness property is very useful to do duality arguments (see section \ref{duality}).

 Pick a suitable Hecke operator $U_{h}$ at $p$ (for example see section 7.3.6 of \cite{bc2009family}), there is a power series $P_{h}(T)\in O(X)[[T]]$ corresponding to the determinant polynomial for $U_{h}$. This power series is entire (convergent on the whole $\mathbb{A}^1$) and it is a \textit{Fredholm series}. Through  Fredholm theory, we have a factorization $P_{h}(T)=Q(T)S(T)$ where $Q(T)$ is a polynomial of degree $m$ with unit leading coefficient and it is prime to $S(T)$. Denote by $\widetilde{Q}(T)=T^mQ(T^{-1})$, then there is a canonical slope decomposition \[\mathcal{S}_X(\mathcal{U},r)=\mathcal{S}_X(\mathcal{U},r)^{Q}\oplus Oth,\] where $\widetilde{Q}(U_{h})$ is zero on  $\mathcal{S}_X(\mathcal{U},r)^{Q}$ and invertible on $Oth$. Moreover, $\mathcal{S}_X(\mathcal{U},r)^{Q}$ is a finite projective $O(X)$-module and indeed independent of the (small enough) radius $r$. Fix the factorization $P_{h}(T)=Q(T)S(T)$, then this decomposition commutes with base change, for any reduced affinoid $Y\longrightarrow X$, we have \[\mathcal{S}_{Y}(\mathcal{U},r)^{Q}\cong \mathcal{S}_X(\mathcal{U},r)^Q \widehat{\otimes}_{O(X)} O(Y).\] Again we refer to section 7.3 of \cite{bc2009family} for more details.

Notice that the tame Hecke action commutes with Hecke action at $p$, obviously the $O(X)$ module $S_X(U,r)^Q$ also has tame Hecke action. Before going on, we set up some notations for Hecke algebras.

Let $S_0$ denote a finite set of primes including $p$ such that for any prime $q \notin S_0$, $q$ is unramified in $E$ and $G(\Q_q)$ is unramified over $\Q_q$ and $U_{q}$ is a maximal open compact subgroup of $G(\Q_q)$. What's more, we always assume $\mathcal{U}=\mathcal{U}_{S_0}\times \prod_{q \notin S_0}\mathcal{U}_q$. In this paper we will mainly consider local Hecke algebra at such $q \notin S_0$.

For any prime $q\notin S_0$ that splits in $E$, suppose $q=\beta_0 \beta_1$. Then the inclusion $\beta_0: E\hookrightarrow \Q_{q}$ (with $E_{\beta_0}\cong \Q_q$) induces an isomorphism $G(\Q_q)\cong GL(n,\Q_q)$ and  another isomorphism $G^{ab}(\Q_q)\cong \Q_q^*$ for the abelian quotient $G^{ab}=U(1)$. Then we can consider the additive valuation $v_q$ on $G^{ab}(\Q_q)$. Let $\pi_q$ denote a uniformizer of $\Q_q$ (e.g. $\pi_q=q$), through this isomorphism, we also let $\pi_q$ denote the resulting element in $G^{ab}(\Q_q)$. Further combine with the natural inclusion $G^{ab}(\Q_q)\hookrightarrow G^{ab}(\A_f)$, we will also view it as an element of the later group.

For any element $x \in GL(n,\Q_q)$, let $T_{q,x}$ denote characteristic function on the double coset $\mathcal{U}_qx\mathcal{U}_q$ (notice that $\mathcal{U}_q$ is conjugated to $GL(n,\Z_q)$), thus become an element inside the spherical Hecke algebra (valued in $\Z$) for $GL(n,\Q_q)$. Denote this algebra by $\mathcal{H}_q$. If we enlarge the coefficient ring into $\Q(q^{\pm\frac{1}{2}})$, then we can apply the usual Satake isomorphism to explicit this ring structure. And for the group $GL(n,\Q_q)$ with standard hyperspecial subgroup $GL(n,\Z_q)$, we usually use these Hecke operators $\{T_{q,x_i}|x_i=\mu_i(\pi_q)\}$ (here $\mu_i$ runs over minuscule cocharacters), see \cite{Gross1998} for more details.  Now  we let $T_{q,x}$ act on $\mathcal{S}_X(\mathcal{U},r)^Q$ (or $\mathcal{S}_X(\mathcal{U},r)$) through the double coset action by $[\mathcal{U}x\mathcal{U}]$. Through this way $\mathcal{H}_q$ acts on the module of $p$-adic forms. For the dual side, we let $T_{q,x}$ act on  $\mathcal{D}_X(U,r)$ through $[\mathcal{U}x^{-1}\mathcal{U}]$. Later through a natural pairing (see section \ref{sectionpairing}), we will identify these $\mathcal{D}_{X}(\mathcal{U},r)$ exactly as the dual space of  $\mathcal{S}_{X}(\mathcal{U},r)$. Then the later action of $\mathcal{H}_q$ is exactly the dual action. In other words, we have $(t.f,\lambda)=(f,t.\lambda)$ for any $t \in \mathcal{H}_q$, $f \in \mathcal{S}_X(\mathcal{U},r)$ and $\lambda \in \mathcal{D}_X(U,r)$. For a prime $q$ that is inert in $E$ and $q \notin S_0$, we can define the Hecke algebra and tis action similarly.

For the prime $p$, we use $\mathcal{H}_p^{-}$ denote the algebra of related Hecke operators at $p$ (not the whole Iwahori-Hecke algebra). This construction is subtle and use the previous mentioned monoid $\mathcal{M}$ inside $B(\Q_p)$. We refer to chapter 7 of \cite{bc2009family} for more details. Let $X$ (and $Q$) vary, through the study of Hecke actions on $\mathcal{S}_X(\mathcal{U},r)^{Q}$ and apply eigenvariety machine (roughly speaking via gluing image of Hecke algebras), we can construct an eigenvariety $\mathcal{E}(\mathcal{U})$ with level $\mathcal{U}$ and a natural weight map $\mathcal{E}(\mathcal{U})\longrightarrow \mathcal{W}$.  See chapter 7 of \cite{bc2009family}. Also see \cite{ludwig2024spectral} for the general machine of constructing eigenvarieties.

\section{Abelian Ihara lemma}
\label{sectionihara}

Let $\det$ denote the natural map to abelian quotient $G\longrightarrow G^{ab}$ and we will also use it to denote the map $G(\A_f)\longrightarrow G^{ab}(\A_f)$ etc. Let $Y$ be an irreducible reduced affinoid with a map $Y\longrightarrow \mathcal{W}$. In this section, we will prove the abelian Ihara lemma. It concerns about abelian forms, i.e. elements $f$ of $\mathcal{S}_Y(\mathcal{U},r)$ or $\mathcal{D}_{Y}(\mathcal{U},r)$ that factors through (think it as a function on $G(\A_f)$) the abelian quotient $G^{ab}(\A_f)$ through the map $\det$.

For any prime  $q \notin S_0$, and an element $x_q \in G(\Q_q)$, consider the local Hecke operator $T_{q,x_q}$ for the double coset $\mathcal{U}_q x_q \mathcal{U}_q$, we denote the number $deg(T_{q,x_q})$ for the cardinality of $(x_q^{-1}\mathcal{U}_qx_q \cap \mathcal{U}_q) \backslash \mathcal{U}_q$.

Here we  compute some examples of this $deg$ function. If $q \notin S_0$ and $q$ splits in $E$, we know that $\mathcal{U}_q$ is conjugated to $GL(n,\Z_q)$. To simplify notations, we assume it is exactly $GL(n,\Z_q)$. Take $x_q=diag(q,...,q,1,...,1)$ ($q$ appears $i$ times with $1\leq i \leq n$), then $deg(T_{q,x_q})=\sharp Gr(n,n-i)(\F_{q})=\sharp Gr(n,i) (\F_q)$. Here $\sharp$ means the cardinality, $\F_{q}$ is the finite field with $q$-elements and $Gr(n,i)$ means the Grassmannian parameterizes $i$-dimensional subspace inside a fixed $n$-dimensional space. If $i=1$ or $i=n-1$, it is just the projective space, and the cardinality is $\frac{q^{n}-1}{q-1}$.

\begin{thm}
\label{ihara}
Let $Y$ be an irreducible reduced affinoid with a map $Y\longrightarrow \mathcal{W}$.

(1) If $\lambda \in \mathcal{D}_{Y}(\mathcal{U},r)$ factors through the map $\det$, then $\lambda=0$.

(2) If $f \in \mathcal{S}_{Y}(\mathcal{U},r)$ factors through the map $\det$ and $f$ is nonzero,  write weight $[\cdot]_Y$ as $\chi=(\chi_1,...,\chi_n)$,  then $\chi$ is \textbf{central}: $\chi_1=\chi_2=...=\chi_n$,  and there exists a finite etale cover $\widetilde{Y}\longrightarrow Y$, a finite abelian extension $E\longrightarrow \widetilde{E}$ and a $p$-adic continuous character \[\psi:G^{ab}(\Q)\backslash G^{ab}(\A_f)/\det(\mathcal{U}^p) \longrightarrow O(\widetilde{Y})^*, \] such that for almost all  prime $q$ of $\Q$ that splits in the field $\widetilde{E}$, the element $\psi(\pi_q)$ lies in $O(Y)^*$, and $T_{q,x_q}(f)=deg(T_{q,x_q})\psi(\pi_q)^{-v_q(\det(x_q))}f$. Moreover, the cover $\widetilde{Y}$, the field $\widetilde{E}$ and the map $\psi$ only depends on $Y$.

\end{thm}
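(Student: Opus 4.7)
The plan is to reduce to a strong local invariance at $g=1$ and then exploit the explicit presentation of $A_{Y,r}$ and $D_{Y,r}$ as power series to distinguish the behaviour of the dual module from that of the primal module. First, I would observe that for any $\lambda$ (or $f$) factoring through $\det$, evaluating the transformation law $\lambda(gu)=\lambda(g)\cdot u_p$ at $g=1$ gives $\lambda_0\cdot u_p=\lambda_0$ for every $u\in\mathcal{U}$ with $\det(u)=1$, where $\lambda_0:=\lambda(1)\in D_{Y,r}$. Since $\det:Iw_p\twoheadrightarrow\Z_p^*$ is surjective and $\mathcal{U}^p$ is neat, this forces $\lambda_0$ to be fixed by the entire subgroup $Iw_p\cap \mathrm{SL}_n(\Z_p)$, which contains $\overline{N}(\Z_p)$ as well as $T(\Z_p)\cap \mathrm{SL}_n(\Z_p)$. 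The same reduction applies verbatim to $f_0:=f(1)\in A_{Y,r}$.

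The heart of the argument is an analysis of this invariance under a pure-translation root subgroup, for instance $R_{n,n-1}$, whose right action on $\overline{N}(\Z_p)$ is a pure shift $z_{n,n-1}\mapsto z_{n,n-1}+a$ without cross terms (extending the paper's formulas for $R_{2,1}$ to general $(i,j)$ with $i=n$). For part (1), writing $\lambda_0=\sum_\alpha y_\alpha C^\alpha$ in the explicit presentation $D_{Y,r}\cong\bigoplus_k O(Y)[\langle C_{i,j}\rangle]$, the dual translation $(\lambda_0.R_{n,n-1}(a))(f)=\lambda_0(f(z_{n,n-1}-a))$ expanded binomially, combined with invariance for all $a\in\Z_p$, is equivalent to the recursion $\sum_{i=0}^{k-1}\binom{k}{i}(-a)^{k-i}y_{\beta+i\,e_{n,n-1}}=0$ holding for every multi-index $\beta$ with vanishing $(n,n-1)$-entry and every $k\geq1$. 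Comparing coefficients in the polynomial $a$ forces $y_\beta=0$ from $k=1$, and then kills $y_{\beta+i\,e_{n,n-1}}$ inductively on $i$, so $\lambda_0=0$ and $\lambda=0$. For part (2), the same pure translation forces $f_0$ to be constant on $\overline{N}(\Z_p)$ as a locally analytic function, hence $f_0\in O(Y)$; combined with $T(\Z_p)\cap \mathrm{SL}_n(\Z_p)$-invariance and the fact that $b\in T(\Z_p)$ acts on $A_{Y,r}$ by multiplication by $\chi(b)$, this forces $\chi(b)=1$ whenever $\prod b_i=1$, which is equivalent to $\chi_1=\cdots=\chi_n$.

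With $f_0$ a scalar and $\chi$ central, $f$ descends via $\det$ to a function $\phi:G^{ab}(\Q)\backslash G^{ab}(\A_f)/\det(\mathcal{U}^p)\longrightarrow O(Y)$ satisfying $\phi(x_py)=\chi_0(x_p)\phi(y)$ for $x_p\in\Z_p^*$, where $\chi_0$ is the common value. The $O(Y)$-module of such $\phi$ is finitely generated and carries a commuting action of the abelian tame Hecke algebra $\bigotimes_{q\notin S_0}\mathcal{H}_q$; passing to a finite \'etale cover $\widetilde{Y}\to Y$ over which all characteristic polynomials of these operators split simultaneously, the module decomposes into joint eigenspaces, and the eigencharacter on the component containing $f$ defines the desired $\psi:X\to O(\widetilde{Y})^*$ with $X=G^{ab}(\Q)\backslash G^{ab}(\A_f)/\det(\mathcal{U}^p)$. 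The abelian extension $\widetilde{E}/E$ is the field cut out by $\psi$ via class field theory for the torus $U(1)_{E/\Q}$, and taking $\widetilde{Y}$, $\widetilde{E}$ minimally ensures dependence only on $Y$, not on $f$. The Hecke identity $T_{q,x_q}(f)=\deg(T_{q,x_q})\psi(\pi_q)^{-v_q(\det(x_q))}f$ follows from the coset decomposition $\mathcal{U}x_q\mathcal{U}=\bigsqcup_i\mathcal{U}g_i$ using $\phi(\det(hg_i^{-1}))=\phi(\det(h))\psi(\det(g_i))^{-1}$ and summing the $\deg(T_{q,x_q})$ identical terms, while the descent $\psi(\pi_q)\in O(Y)^*$ for $q$ split in $\widetilde{E}$ comes from $\mathrm{Gal}(\widetilde{Y}/Y)$-invariance.

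The main obstacle is the asymmetry between parts (1) and (2): the bounded-coefficient condition in the definition of $D_{Y,r}$ converts translation invariance into the vanishing recursion above, with no analogue of real Haar measure to yield nonzero invariant distributions, while on the function side the same invariance only cuts down to the one-dimensional subspace of constants. This rigidity on the dual side is precisely what Newton identified as essential, and it is what will allow the duality arguments in later sections to feed the abelian Ihara lemma back into the level-raising machine.
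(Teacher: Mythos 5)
Your overall skeleton for part (1) and for centrality in part (2) is the right one (and matches the paper's: reduce to invariance of the values under $Iw_p\cap \mathrm{SL}_n(\Z_p)$, then exploit a translation-type root subgroup against the bounded-coefficient structure of $D_{Y,r}$), but two of your concrete steps fail as written. First, the reduction itself: the correct reason the values are fixed by $Iw_p\cap\mathrm{SL}_n(\Z_p)$ is simply that any $u_p$ in that group gives an element $u\in\mathcal{U}$ supported at $p$ with $\det(u)=1$; surjectivity of $\det$ on $Iw_p$ and neatness are irrelevant. More importantly, your chosen root subgroup does not act by a pure shift: with the action $(f.g)(x)=f(gx)$, left multiplication by $R_{n,n-1}(a)$ on $\overline{N}(\underline{z})$ sends $z_{n,n-1}\mapsto z_{n,n-1}+a$ \emph{and} $z_{n,j}\mapsto z_{n,j}+p\,a\,z_{n-1,j}$ for $j<n-1$; only the first-column subgroups $R_{i,1}$ (the paper uses $R_{2,1}$) translate a single coordinate without cross terms, so for $n\geq 3$ your binomial recursion is not the invariance identity and the vanishing argument does not go through as displayed. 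This is repairable by switching to $R_{2,1}$. Two smaller patches are also needed: $\lambda(1)=0$ does not imply $\lambda=0$ (run the same argument at every $a\in G(\A_f)$), $f(1)$ may vanish even though $f\neq0$ (work at a point where $f$ is nonzero), and constancy of the value on $\overline{N}(\Z_p)$ uses invariance under all of $\overline{N}(p\Z_p)$, not under the single translation subgroup.

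The genuine gap is your construction of $\psi$ and the Hecke identity. Decomposing the finite $O(Y)$-module of abelian functions into joint eigenspaces after a finite \'etale cover and taking ``the eigencharacter of the component containing $f$'' does not prove the statement: it does not show that the given $f$ is itself an eigenvector (the module need not be semisimple and an arbitrary abelian form need not lie in a single eigenspace), it does not show that $\psi$ is independent of $f$ (minimality of $\widetilde{Y},\widetilde{E}$ does not achieve this), and the relation $\phi(\det(hg_i^{-1}))=\phi(\det(h))\psi(\det(g_i))^{-1}$ that you feed into the coset computation is precisely the transformation law that has to be proved, so the third paragraph is circular. The content of the theorem is that the eigenvalues are forced by the weight alone: define $\psi_0$ on $\det(\mathcal{U})$ as $\chi_1$ composed with projection to $\det(\mathcal{U}_p)=\Z_p^*$, note it is well defined modulo $G^{ab}(\Q)$ (using $G(\Q)\twoheadrightarrow G^{ab}(\Q)$ and $G^{ab}(\Q)\cap\det(\mathcal{U})=1$), extend it from the finite-index subgroup $G^{ab}(\Q)\backslash\det(\mathcal{U})/\det(\mathcal{U}^p)$ to the full double-coset group over a finite \'etale cover $\widetilde{Y}$, and use class field theory for $U(1)$ to produce $\widetilde{E}$ so that for $q$ split in $\widetilde{E}$ one has $\det(x_q)\in G^{ab}(\Q)\det(\mathcal{U})$. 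Then $\mathcal{U}$-invariance plus abelianness give $T_{q,x_q}(f)=deg(T_{q,x_q})\,(f|x_q)$ with $(f|x_q)=\chi_1(\det(x_{1,p}))f=\psi(\pi_q)^{-v_q(\det(x_q))}f$ for \emph{every} abelian $f$, which yields both the exact eigenvalue and the independence of $f$. Without this (or an equivalent) mechanism, the last third of your argument does not establish the claimed identity or the ``depends only on $Y$'' clause.
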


\begin{proof}
These two statements are parallel and can be proved by similar ideas.

For the first statement, by definition, for any element $a \in G(\A_f)$ and $g \in \mathcal{U}$, we have $\lambda(a.g)=\lambda(a)g_p$. Pick up an element $g \in G^{der}(A_f)\cap \mathcal{U}$, we know \[\lambda(a)=\lambda(a.g)=\lambda(a).g_p.\] Notice that $g_p$ can be any element in $SL(n,\Q_p) \cap Iw_p$ (just construct $g$ with other components being trivial). Therefore for any $\widetilde{f} \in A_{X,r}$, we have \[(\widetilde{f}.g_{p}^{-1}-\widetilde{f},\lambda(a))=0.\] This property will force $\lambda$ to be zero,

For simplicity, by abuse of notations, still let $\lambda$ denote $\lambda(a)$. Then it is an element in $D_{X,r}$. Recall that we have explicit this module as $\oplus_{k} O(Y)[\langle C_{i,j} \rangle]$, it is enough to show that each component of $\lambda$ is zero, Take a summand $O(Y)[\langle C_{i,j} \rangle]$ and suppose the corresponding part of $\lambda$ is $C$. Through induction, we will show that each $C_{\alpha}$ is zero.

We will do induction for the number $\alpha_{2,1}$. If it is 0, then consider $\widetilde{\alpha}=\alpha+(1,0,\cdots,0)$. The polynomial $Z^{\widetilde{\alpha}}$ certainly lies in the module $O(Y)\langle Z_{i,j}\rangle$ (view this module as the corresponding summand of $A_{X,r}$). Take an element $g_{p}=R_{2,1}(a_{2,1})$ with $a_{2,1} \in \Z_p$ with large enough valuation such that $Z^{\widetilde{\alpha}}.g_p$ still lies in this summand and suppose \[Z^{\widetilde{\alpha}}.g_p=(Z_{2,1}+\delta)\prod_{(i,j)\neq (2,1)}Z^{\alpha_{i,j}} .\]

Then the difference $Z^{\widetilde{\alpha}}.g_p-Z^{\widetilde{\alpha}}$ is exactly $\delta Z^{\alpha}$. By the above property, $(\delta Z^{\alpha}, C)=0$. Then $C_{\alpha}=0$.

Then we proceed via induction. Suppose for any $\alpha$ with $\alpha_{2,1}< N_{0}$ ($N_0$ is a positive integer), we have $C_{\alpha}=0$. Now for an element $\alpha$ with $\alpha_{2,1}=N_0$, we do the above process again to get $\widetilde{\alpha}$, $g_p$ and \[Z^{\widetilde{\alpha}}.g_p=(Z_{2,1}+\delta)^{N_0+1}\prod_{(i,j)\neq (2,1)}Z^{\alpha_{i,j}} .\]

Notice that $(Z_{2,1}+\delta)^{N_0+1}-Z_{2,1}^{N_0+1}=\delta Z_{2,1}^{N_0}+\sum_{e<N_0}\delta_{e}Z_{2,1}^e$, then $C_{\alpha}=0$. By induction, we finished the proof of the first statement.

Now we turn to the second statement.

Still pick up any $g \in G^{der}(\A_f) \cap \mathcal{U}$ and $x \in G(\A_f)$, we get \[f(x)=f(x.g)=f(x).g_p.\] For any $x_0 \in G(\A_f)$ with $f(x_0) \neq 0$ (such $x_0$ exists because $f$ is nonzero), set $\widetilde{f}=f(x_0)$, an element in $A_{X,r}$. Then $\widetilde{f}$ is invariant under the group $SL(n,\Q_p) \cap Iw_p$. Then the $Iw_p$-action factors through the abelian quotient $\det:Iw_p\twoheadrightarrow \Z_p^*$. Denote the kernel of $\det$ as $Iw_p^{der}$. Notice that $Iw_p=Iw_p^{der} T(\Z_p)$, for any $n_0 \in \overline{N}(\Z_p)$ and $b_0 \in T(\Z_p)$, we have \[\widetilde{f}(n_0.b_0)=(\widetilde{f}.n_0)(b_0)=\widetilde{f}(b_0)=\chi(b_0)\widetilde{f}(1).\] In particular, $\widetilde{f}(1) \neq 0$. For any $t \in \Z_p^*$, set $b=diag(t,t^{-1},1,...,1)$, then \[\widetilde{f}(1)=\widetilde{f}(b)=\chi_1(t)\chi_2(t)^{-1}\widetilde{f}(1).\] Because $O(Y)$ is a domain, we get $\chi_1=\chi_2$. Similarly, we find \[\chi_1=\chi_2=...=\chi_n,\] the weight $\chi$ is central.

What's more, the $\det$ map has a (non-canonical) section $s: \Z_p^*\longrightarrow Iw_p$,  sending $a \in \Z_p^*$ to $diag(a,1,...,1)$. Then we can write down the $Iw_p$ action more clearly, for any $g \in Iw_p$, we have \[\widetilde{f}.g=\chi_1(\det(g))\widetilde{f}.\]

Notice that the map $\det: G\longrightarrow G^{ab}$ between two group schemes over $\Q$ also has a non-canonical section via picking up an anisotropic vector inside the Hermitian space over $E$, then the map $G(\Q)\longrightarrow G^{ab}(\Q)$ is surjective, and we have $G^{ab}(\Q) \cap \det(\mathcal{U})=1$ (we can also shrink the level $\mathcal{U}$ in the beginning to get this property).

Consider the $p$-adic continuous character $\psi_0:\det(\mathcal{U})\longrightarrow \det(\mathcal{U}_p)=Z_p^*\longrightarrow O(Y)^*$, since $\det(\mathcal{U}) \cap G^{ab}(\Q)=1$, we  get a $p$-adic character (again denote it by $\psi_0$): \[G^{ab}(\Q)\backslash \det(\mathcal{U}) / \det(\mathcal{U}^p) \longrightarrow O(Y)^*. \] Notice the source is a subgroup of $G^{ab}(\Q) \backslash G^{ab}(\A_f)/ \det(\mathcal{U}^p)$ with finite index (again by generalized finiteness of class group), therefore there exists a finite etale cover $\widetilde{Y}\longrightarrow Y$ and a $p$-adic continuous character \[\psi:G^{ab}(\Q) \backslash G^{ab}(\A_f)/ \det(\mathcal{U}^p) \longrightarrow O(\widetilde{Y})^* \] extending the original character $\psi_0$.

Recall $G^{ab}=U(1)$, now apply class field theory to the field $E$, we can find a finite abelian extension (indeed anticyclotomic) $E\longrightarrow \widetilde{E}$ with a canonical isomorphism \[Gal(\widetilde{E}/E)\cong G^{ab}(\Q) \backslash G^{ab}(\A_f)/\det(\mathcal{U}).\] In particular, for any prime $q$ of $\Q$ that splits  in $\widetilde{E}$, we know that  $\pi_q$ lies in $G^{ab}(\Q)\det(\mathcal{U})$.

Further assume that $q \notin S_0$, let's compute the Hecke action $T_{q,x_q}$ ($x_q \in G(\Q_q)$). First observe that \[\mathcal{U}x_q\mathcal{U}=\coprod_{i}\mathcal{U}x_q y_i, \ y_i \in (x_q^{-1}\mathcal{U}_qx_q \cap \mathcal{U}_q) \backslash \mathcal{U}_q,\] therefore for any $g \in G(\A_f)$, we have \[T_{q,x_q}(f)(g)=\sum_{i}(f|_{x_qy_i})(g)\]\[=\sum_{i}f(gy_i^{-1}x_q^{-1})=\sum_i f(gx_q^{-1}y_{i}^{-1})\] \[=\sum_i ((f|y_i)|x_q)(g)=deg(T_{q,x_q})(f|x_{q})(g).\] Here the last equality follows from  the $\mathcal{U}$-invariance of $f$.

Therefore it remains to compute  the term \[(f|x_{q})(g)=f(gx_q^{-1}).\]

By our assumption on $q$, we know that $\det(\pi_q) \in G^{ab}(\Q)\det(\mathcal{U}) $. On the other hand, $\det(x_q)=\pi_q^{v_q(\det(x_q))}\varepsilon$ with $\varepsilon \in Z_q^*$, while $\det(\mathcal{U}_q)=\Z_q^*$, we find that \[\det(x_q) \in G^{ab}(\Q)\det(\mathcal{U}).\] Then $\psi(\det(x_q))=\psi_0(\det(x_q)) \in O(Y)^*$.

Notice that $\det:G(\Q)\longrightarrow G^{ab}(\Q)$ is surjective, therefore we can find $x_0 \in G(\Q)$ and $x_1 \in \mathcal{U}$ such that $\det(x_q)^{-1}=\det(x_0)\det(x_1)$. Now we have \[f(g x_q^{-1})=f(gx_0x_1)=f(x_0gx_1)\] \[=f(gx_1)=f(g).x_{1,p}=\chi_1(\det(x_{1,p}))f(g),\] here we use the property that $f$ is abelian and $f$ factors through $G(\Q)\backslash G(\A_f)$.

Finally from the construction of $\psi$, we  get \[\chi_1(\det(x_{1,p}))=\psi(\det(x_q))^{-1}=\psi(\pi_q)^{-v_q(\det(x_q))}.\] In summary, \[T_{q,x_q}(f)=deg(T_{q,x_q})\psi(\pi_q)^{-v_q(\det(x_q))}f.\] Moreover, the construction of $\widetilde{E}$, $\widetilde{Y}$ and $\psi$ only depends on $Y$ (independent of $f$). We're done.

\end{proof}

Here for simplicity we work with definite unitary groups over $\Q$, but indeed this abelian Ihara lemma  holds in more general setting.  If $G$ is a reductive group over $\Q$ with $G(\R)$ being compact and the derived subgroup $G^{der}$ is simply connected, then this theorem (assume $G$ split at $p$)  works similarly. This generalization includes all unitary  group over any totally real number field which is definite at all real places.  The theory of overconvergent $p$-adic forms on such general $G$ is studied in \cite{loeffler2011overconvergent}.

In the proof of this theorem, we haven't used too much special properties of unitary groups and some special statements can be replaced by general methods. For instance, we use a special root subgroup $R_{2,1}$, but indeed the only property we need is that this root subgroup commutes with other root subgroups. Through the commutator relations between root subgroups, we can always find such a root (e.g. highest root is suitable for us). With more effort, it maybe possible to further relax the assumption that $G$ is split at $p$.

Another special fact during the proof is that we use certain non-canonical section to show $G(\Q)\twoheadrightarrow G^{ab}(\Q)$. Now we illustrate a  general strategy to deduce this  surjection.

Consider the exact sequence $1\longrightarrow G^{der}\longrightarrow G\longrightarrow G^{ab}\longrightarrow 1$. Because $G^{der}$ is simply connected, for any non-Archimedean field $k$, the Galois cohomology $H^1(k,G^{der})$ vanish (see \cite{bruhat1987groupes} or theorem 6.4 in \cite{platonov1993algebraic}). And the Hasse principle holds for simply connected groups, the map $H^1(\Q,G^{der})\longrightarrow \prod_{v}H^1(\Q_v,G^{der})$ ($v$ runs over all place) is injective (indeed, it is bijective, see theorem 6.6 in \cite{platonov1993algebraic}). We only need to care about $H^1(\R,G^{der})$ now. Because $G^{ab}(\R)$ is a connected compact lie group (indeed isomorphic to products of $U(1)(\R)$), the map $G(\R)\longrightarrow G^{ab}(\R)$ is surjective. Then use the above exact sequence, we get an injection $H^1(\R,G^{der})\hookrightarrow H^1(\R,G)$. Consider the following commutative diagram
\[\xymatrix{
H^1(\Q,G^{der}) \ar@{^{(}->}[d] \ar[r] & H^1(\Q,G) \ar[d] \\
H^1(\R,G^{der}) \ar@{^{(}->}[r] & H^1(\R,G)} \] the map $H^1(\Q,G^{der})\longrightarrow H^1(\Q,G)$ is injective. Therefore the map $G(\Q)\longrightarrow G^{ab}(\Q)$ is surjective in general. And we can similarly construct $\psi$, $\widetilde{Y}$ and $\widetilde{E}$ etc in the later general setting.

\begin{rem}
Regard the second statement, later we will define the notation of \textit{very Eisenstein} (following the convention of \cite{newton2016level}).
\end{rem}

\section{$p$-adic level raising}
\label{sectionlevelraising}

We first introduce a natural pairing and make some duality arguments. Finally combine with the abelian Ihara lemma, we  deduce $p$-adic level raising results.

\subsection{A natural pairing}
\label{sectionpairing}
Let $X\longrightarrow \mathcal{W}$ denote a reduced affinoid.

We define a natural pairing between $\mathcal{S}_X(\mathcal{U},r)$ and $\mathcal{D}_X(\mathcal{U},r)$, which is important to do duality arguments. Take a $\Q$-valued Haar measure $\mu_{\mathcal{U}}$ on $G(\Q)\backslash G(\A_f)$ with $\mu_{\mathcal{U}}(\mathcal{U})=1$. Then we define the pairing as follow: \[(f,\lambda)=\int_{G(\Q)\backslash G(\A_f)} (f(g),\lambda(g))d\mu_{\mathcal{U}},\] where $f \in \mathcal{S}_X(\mathcal{U},r)$ and $\lambda \in \mathcal{D}_X(\mathcal{U},r)$. More explicitly, we can write this integration as a finite sum. Fix a set of double coset representatives $\{x_i|1\leq i\leq h\}$ for $G(\Q)\backslash G(\A_f)/\mathcal{U}$, then we can rewrite the pairing as (use neatness of $\mathcal{U}$) \[(f,\lambda)=\sum_{i=1}^{h}(f(x_i),\lambda(x_i)).\]

Recall that through this set of representatives, we have isomorphisms $\mathcal{S}_X(\mathcal{U},r) \cong A_{X,r}^{h}$ and $\mathcal{D}_X(\mathcal{U},r) \cong D_{X,r}^{h}$, then through this natural pairing, we can identify $\mathcal{D}_{X}(\mathcal{U},r)$ with the $O(X)$-dual of $\mathcal{S}_{X}(\mathcal{U},r)$ (thus justify the name \textit{dual}). Moreover, regarding the slope decomposition (see  section \ref{sectionslope}) \[\mathcal{S}_X(\mathcal{U},r)=\mathcal{S}_X(\mathcal{U},r)^{Q}\oplus Oth,\] we define $\mathcal{D}_X(\mathcal{U},r)^Q$ to be the submodule of $\mathcal{D}_{X}(\mathcal{U},r)$ consists of maps that sends $Oth$ to 0. Then it is naturally the dual module of $\mathcal{S}_X(\mathcal{U},r)^{Q}$.

Next we show that this pairing is well behaved under the  Hecke action.

\begin{prop}
For any $f \in \mathcal{S}_X(\mathcal{U},r)$, $\lambda \in \mathcal{D}_X(\mathcal{V},r)$ and $g \in G(\A_f)$ with $g_p \in Iw_p$, we have $(f|[\mathcal{U}g\mathcal{V}], \lambda)=(f,\lambda|[\mathcal{V}g^{-1}\mathcal{U}])$.

\end{prop}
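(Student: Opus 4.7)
The plan is to unfold both sides of the claimed identity using the integral (equivalently, finite-sum) form of the pairing, expand the Hecke operator through an explicit coset decomposition, and then transfer the action on $f$ to an action on $\lambda$ using two ingredients: the defining adjointness property $(v.h,\mu)=(v,\mu.h^{-1})$ that was built into $D_{X,r}$ when the monoid action on the dual was defined, and right-invariance of the Haar measure on $G(\Q)\backslash G(\A_f)$.

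Concretely, I would fix a decomposition $\mathcal{U}g\mathcal{V}=\coprod_i \mathcal{U}g_i$. By definition of the double-coset operator,
\[
(f|[\mathcal{U}g\mathcal{V}])(y)=\sum_i f(y g_i^{-1})\,g_{i,p}.
\]
Plugging into the pairing,
\[
(f|[\mathcal{U}g\mathcal{V}],\lambda)=\sum_i \int_{G(\Q)\backslash G(\A_f)}\bigl(f(y g_i^{-1})\,g_{i,p},\,\lambda(y)\bigr)\,d\mu(y).
\]
Using the adjointness identity on $A_{X,r}\times D_{X,r}$ with $h=g_{i,p}\in Iw_p$, each integrand rewrites as $(f(yg_i^{-1}),\,\lambda(y).g_{i,p}^{-1})$. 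Now substitute $z=yg_i^{-1}$; by right-invariance of Haar, the integral becomes
\[
\int (f(z),\,\lambda(z g_i).g_{i,p}^{-1})\,d\mu(z)=\int\bigl(f(z),\,(\lambda| g_i^{-1})(z)\bigr)\,d\mu(z),
\]
where I use $(\lambda|x)(z)=\lambda(zx^{-1})x_p$ with $x=g_i^{-1}$. Summing over $i$ gives $(f,\sum_i \lambda|g_i^{-1})$, so the proposition reduces to identifying $\sum_i \lambda|g_i^{-1}$ with $\lambda|[\mathcal{V}g^{-1}\mathcal{U}]$.

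The main obstacle is precisely this last identification. Inverting our chosen decomposition yields $\mathcal{V}g^{-1}\mathcal{U}=\coprod_i g_i^{-1}\mathcal{U}$, a decomposition into \emph{right} $\mathcal{U}$-cosets, whereas the Hecke operator $\lambda|[\mathcal{V}g^{-1}\mathcal{U}]$ is defined via a \emph{left} $\mathcal{V}$-coset decomposition $\mathcal{V}g^{-1}\mathcal{U}=\coprod_j \mathcal{V}h_j$. These two partitions of the same double coset generally have different cardinalities, so the matching is not immediate. The reconciliation uses the right $\mathcal{V}$-invariance of $\lambda$: elements of $g_i^{-1}\mathcal{U}$ that lie in a common left $\mathcal{V}$-coset $\mathcal{V}h_j$ yield equal contributions $\lambda|h_j$, and the multiplicities work out so that $\sum_i \lambda|g_i^{-1}=\sum_j \lambda|h_j$. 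The neatness assumption on the tame level and the compatible normalizations of the Haar measures (with $\mu(\mathcal{U})=1$ vs.\ $\mu(\mathcal{V})=1$, which differ by $[\mathcal{U}\!:\!\mathcal{V}]$) ensure that no extra scalar factor appears, so after this bookkeeping the two sides agree on the nose.
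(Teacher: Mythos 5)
The first half of your computation is correct and is essentially the same unfolding the paper performs: expanding $f|[\mathcal{U}g\mathcal{V}]$ along $\mathcal{U}g\mathcal{V}=\coprod_i\mathcal{U}g_i$, moving $g_{i,p}\in Iw_p$ across the pairing by the defining adjunction on $A_{X,r}\times D_{X,r}$, and changing variables using invariance of the measure. The gap is the final identification $\sum_i\lambda|g_i^{-1}=\lambda|[\mathcal{V}g^{-1}\mathcal{U}]$, which is simply false as an identity of elements. Take $g=1$ with $\mathcal{V}\subset\mathcal{U}$ (the case used later, $\mathcal{V}=\mathcal{U}_0\cap\mathcal{U}_1$): then $\mathcal{U}g\mathcal{V}=\mathcal{U}$ is a single left $\mathcal{U}$-coset, so your left-hand sum is just $\lambda$, which is only $\mathcal{V}$-invariant, while $\lambda|[\mathcal{V}1\mathcal{U}]=\sum_{h_j\in\mathcal{V}\backslash\mathcal{U}}\lambda|h_j$ is the trace to level $\mathcal{U}$, with $[\mathcal{U}:\mathcal{V}]$ terms. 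Your observation that each $\lambda|g_i^{-1}$ equals $\lambda|h_{j(i)}$ for the unique $j(i)$ with $g_i^{-1}\in\mathcal{V}h_{j(i)}$ is right (it uses $\lambda|v=\lambda$ for $v\in\mathcal{V}$), but the resulting multiplicities sum to $[\mathcal{V}:\mathcal{V}\cap g^{-1}\mathcal{U}g]$, not to the number of $h_j$'s, so "the multiplicities work out" cannot hold precisely because the two partitions have different cardinalities, as you yourself note. The appeal to the measure normalizations cannot repair an identity of functions; the normalizations enter elsewhere, namely in that the two sides of the proposition are pairings at \emph{different} levels (the left is the level-$\mathcal{V}$ pairing, the right the level-$\mathcal{U}$ pairing), a point your computation never tracks.

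The argument can be completed in two ways. (a) Push your computation one step further: writing $g_i=u_igv_i$ and $h_j=v_j'g^{-1}u_j'$ with $u_i,u_j'\in\mathcal{U}$, $v_i,v_j'\in\mathcal{V}$, and using $\mathcal{U}$-invariance of $f$, $\mathcal{V}$-invariance of $\lambda$, the adjunction and invariance of the measure, every integral $\int(f(z),(\lambda|g_i^{-1})(z))\,d\mu$ and every integral $\int(f(z),(\lambda|h_j)(z))\,d\mu$ equals the single quantity $\int(f(z),(\lambda|g^{-1})(z))\,d\mu$; the proposition then reduces to the counting identity $\tfrac{\mu(\mathcal{U})}{\mu(\mathcal{V})}\,[\mathcal{V}:\mathcal{V}\cap g^{-1}\mathcal{U}g]=[\mathcal{U}:\mathcal{U}\cap g\mathcal{V}g^{-1}]$, which holds because Haar measure on $G(\A_f)$ is conjugation-invariant, and this is exactly where the normalizations $\mu_{\mathcal{U}}(\mathcal{U})=1$ versus $\mu_{\mathcal{V}}(\mathcal{V})=1$ are consumed. (b) Follow the paper: by neatness, multiplication gives a bijection between $G(\Q)\backslash G(\A_f)/\mathcal{V}\times(g^{-1}\mathcal{U}g\cap\mathcal{V})\backslash\mathcal{V}$ and $G(\Q)\backslash G(\A_f)/(g^{-1}\mathcal{U}g\cap\mathcal{V})$; rewriting the left-hand pairing as a sum over the latter set and translating by $g^{-1}$ turns it into $\sum_k\bigl(f(w_k).g_p,\lambda(w_kg)\bigr)$ over $G(\Q)\backslash G(\A_f)/(\mathcal{U}\cap g\mathcal{V}g^{-1})$, and the symmetric computation applied to $(f,\lambda|[\mathcal{V}g^{-1}\mathcal{U}])$ lands on the same sum, so the two coset decompositions never have to be compared directly. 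As written, your proof asserts a false key step and hand-waves where this bookkeeping should be, so it is not yet a proof.
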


\begin{proof}

The proof is similar to the proof of proposition 2.10 of \cite{newton2016level}.

Let $Prod$ denote $(f|[UgV], \lambda)$. Take a set of representatives $\{x_i\}$ for $g^{-1}\mathcal{U}g \cap \mathcal{V} \backslash \mathcal{V}$ and a set of representative $\{y_j\}$ for $G(\Q) \backslash G(\A_f)/\mathcal{V}$. By definition, we have \[Prod=\sum_{y_j\in G(\Q) \backslash G(\A_f)/\mathcal{V} }((f|[\mathcal{U}g\mathcal{V}])(y_j),\lambda(y_j))\] \[=\sum_{x_i \in g^{-1}\mathcal{U}g \cap \mathcal{V} \backslash \mathcal{V}}\sum_{y_j\in G(\Q) \backslash G(\A_f)/\mathcal{V}} ((f|(gx_i))(y_j),\lambda(y_j))\] \[=\sum_{x_i \in g^{-1}\mathcal{U}g \cap \mathcal{V} \backslash \mathcal{V}}\sum_{y_j\in G(\Q) \backslash G(\A_f)/\mathcal{V}} (f(y_jx_i^{-1}g^{-1}).(gx_{i})_{p},\lambda(y_j))\] \[=\sum_{x_i \in g^{-1}\mathcal{U}g \cap \mathcal{V} \backslash \mathcal{V}}\sum_{y_j\in G(\Q) \backslash G(\A_f)/\mathcal{V}} (f(y_jx_i^{-1}g^{-1}) ,\lambda(y_j).(x_i^{-1}g^{-1})_p) \] \[=\sum_{x_i \in g^{-1}\mathcal{U}g \cap \mathcal{V} \backslash \mathcal{V}}\sum_{y_j\in G(\Q) \backslash G(\A_f)/\mathcal{V}} (f(y_jx_i^{-1}g^{-1}) ,\lambda(y_jx_{i}^{-1}).g^{-1}_p).\] Here for the last equality, we use the property $\lambda(y_jx_{i}^{-1})=\lambda(y_j).x_{i,p}^{-1}$.

Because the level $\mathcal{V}$ is neat, the multiplication   induces an isomorphism \[G(\Q) \backslash G(\A_f)/\mathcal{V} \times   \mathcal{V}/g^{-1}\mathcal{U}g \cap \mathcal{V} \cong G(\Q)\backslash G(\A_f)/g^{-1}\mathcal{U}g \cap \mathcal{V},\] \[(y_{j},x_{i}^{-1})\mapsto y_j x_i^{-1}. \] This is a routine check: suppose $y_jx_i^{-1}=\delta_0 y_{j_1}x_{i_1}^{-1}\delta_1$, with $\delta_0 \in G(\Q)$ and $\delta_1 \in g^{-1}\mathcal{U}g \cap \mathcal{V}$, then $y_{j}$ and $y_{j_1}$ is the same in the double coset $G(\Q) \backslash G(\A_f)/\mathcal{V}$, thus $y_j=y_{j_1}$. Then $y_{j}^{-1}\delta_0^{-1}y_j=x_{i_1}^{-1}\delta_1 x_i$ lies in $G(\Q) \cap \mathcal{V}$, which is trivial. Then $\delta_0=1$ and $x_i=x_{i_1}$.

Denote $z_{k}=y_j x_i^{-1}$, the set $\{z_k\}$ is a set of representatives for $G(\Q)\backslash G(\A_f)/g^{-1}\mathcal{U}g \cap \mathcal{V}$ and we can rewrite the above equality as \[Prod=\sum_{z_k \in G(\Q)\backslash G(\A_f)/g^{-1}\mathcal{U}g \cap \mathcal{V}}(f(z_kg^{-1}),\lambda(z_k).g_p^{-1}).\] The right multiplication by $g^{-1}$ induces an isomorphism \[G(\Q)\backslash G(\A_f)/g^{-1}\mathcal{U}g \cap \mathcal{V}\cong G(\Q)\backslash G(\A_f)/\mathcal{U} \cap g\mathcal{V}g^{-1}\] \[z_k\mapsto w_k=z_kg^{-1}.\] Then we have \[Prod=\sum_{w_k \in G(\Q)\backslash G(\A_f)/\mathcal{U} \cap g\mathcal{V}g^{-1}}(f(w_k),\lambda(w_kg).g_p^{-1})\] \[=\sum_{w_k \in G(\Q)\backslash G(\A_f)/\mathcal{U} \cap g\mathcal{V}g^{-1}}(f(w_k).g_p,\lambda(w_kg)).\]

Then observe that we can do similar computation for $(f,\lambda|[\mathcal{V}g^{-1}\mathcal{U}])$, it is exactly the above sum. We're done.

\end{proof}

Notice that the $Iw_p$-module $A_{X,r}$ has a further monoid $\mathcal{M}$-action (see section \ref{sectionpadicforms}), if $g_p \in \mathcal{M}$, this proposition still works. Under this pairing, we have identified $\mathcal{D}_X(\mathcal{U},r)$ with $\mathcal{S}_X(\mathcal{U},r)^*$ and this proposition shows that the Hecke action on $\mathcal{D}_X(\mathcal{U},r)$ is exactly the dual action (see section \ref{sectionslope} for notations of Hecke action).

\begin{rem}
Here for simplicity, we work with neat levels. This pairing can be defined without this assumption. And it is still Hecke equivariant. But the computation is more subtle due to  some   volume factors.   In the setting of definite quaternion algebras, see \cite{newton2016level} proposition 2.10 for more details.
\end{rem}

On the other hand, through the slope decomposition, since $\mathcal{S}_X(\mathcal{U},r)^Q$ is a finitely generated projective $O(X)$-module, it is reflexive, i.e. the natural map $\mathcal{S}_X(\mathcal{U},r)^Q\longrightarrow (\mathcal{S}_X(\mathcal{U},r)^Q)^{**}$ is an isomorphism. This is because that any $O(X)$-linear map between finite $O(X)$-modules are automatically continuous. See proposition 2.1 of \cite{buzzard2007eigenvarieties} for more details. In particular, we can compute the continuous dual just by the usual dual in commutative algebra. Then it is an easy  exercise, see lemma 4 of \cite{newton2011geometric} for details. Thus the pairing is perfect if we restrict to $\mathcal{S}_{X}(\mathcal{U},r)^Q \times \mathcal{D}_X(\mathcal{U},r)^Q$. What's more, for any closed immersion $Y\hookrightarrow X$ with $Y$ being a reduced affinoid, recall that we have \[\mathcal{S}_{Y}(\mathcal{U},r)^{Q}\cong \mathcal{S}_X(\mathcal{U},r)^Q \otimes_{O(X)} O(Y),\] combine  with the finitely generated and projective property, we also get the base change property for dual modules: \[\mathcal{D}_{Y}(\mathcal{U},r)^{Q}\cong \mathcal{D}_X(\mathcal{U},r)^Q \otimes_{O(X)} O(Y).\]

\subsection{old and new forms}
\label{oldandnew}
From now on, we assume that $n=3$. Further we  specify three level groups to study level raising questions.

Pick up a level subgroup $\mathcal{U}_0$, recall that  we have defined a finite set $S_0$ of  ('bad') primes. Take an odd prime $l$ that is inert in $E$ and $l \notin S_0$. Then $G(\Q_l)$ is an unramified unitary group $U(3)(\Q_l)$ and it is rank one. We further assume that  the $l$-part $\mathcal{U}_{0,l}$ is a hyperspecial subgroup of $G(\Q_l)$, thus corresponding to a hyperspecial vertex in the Bruhat-Tits tree for $G(\Q_l)$.  Let $T_l$ denote  the standard local Hecke operator at $l$ (generator for this spherical Hecke algebra). Through double  coset operator it also acts on the space of $p$-adic forms or its dual module. Let $\widetilde{\mathcal{U}_l}$ denote the maximal compact open subgroup of $G(\Q_l)$ corresponding to an adjacent vertex. Then $\widetilde{\mathcal{U}_l} \cap \mathcal{U}_{0,l}$  is an Iwahori subgroup. This Bruhat-Tits tree is bi-homogeneous: each hyperspecial vertex has $l^3+1$ adjacent vertices and each special (but not hyperspecial) vertex has $l+1$ adjacent vertices. Each adjacent vertex of a hyperspecial vertex is a special but not hyperspecial vertex and vice versa. In terms of this tree, the Hecke operator $T_l$ sends each hyperspecial vertex into the formal sum of closest hyperspecial vertices. Let $S_1=S_0 \cup \{l\}$, and we get two other level subgroups $U_1=U_{S_0}\times \widetilde{\mathcal{U}_l}\times \mathcal{U}_0^{S_1}$ and $V=U_0 \cap U_1$. These three level subgroups  only differ at the $l$-part. And we let $\mathbb{T}^{S_0}$ denote the integral abstract tame Hecke algebras away from $S_0$. And $\mathbb{T}^{S_1}$ denote similarly the abstract tame Hecke algebras away from $S_1$. And $\mathbb{T}^{S_0}$ is generated by $T_l$ as $\mathbb{T}^{S_1}$-algebra.

Still let $X\longrightarrow \mathcal{W}$ denote a reduced affinoid. To simplify notations, we introduce \[L_0=\mathcal{S}_X(\mathcal{U},r)^{Q},\ L_0^*=\mathcal{D}_X(\mathcal{U},r)^Q,\]
\[L_1=\mathcal{S}_X(\mathcal{U}_1,r)^Q, \  L_1^*=\mathcal{D}_X(\mathcal{U},r)^Q,\] \[M=\mathcal{S}_X(\mathcal{V},r)^Q, \ M^*=\mathcal{D}_X(\mathcal{V},r)^Q.\]

Now we introduce the \textit{level raising} map $i:L_0 \oplus L_1 \longrightarrow M$ by \[i(f_0,f_1)=f_0|[\mathcal{U}_01\mathcal{V}]+f_1|[\mathcal{U}_11\mathcal{V}].\] The double coset operator $[U_01V]$ is easy to describe, it is just the 'forget' map, \[f_0|[\mathcal{U}_01\mathcal{V}]=f_0.\] Similarly $f_1|[\mathcal{U}_11\mathcal{V}]=f_1$. Define the image $im(i)$ inside $M$ to be the space of \textit{old (at $l$) forms}.

We also have a \textit{level lowering}  map $i^+:M\longrightarrow L_0 \oplus L_1$ by \[i^+(f)=(f|[\mathcal{V}1\mathcal{U}_0],f|[\mathcal{V}1\mathcal{U}_1]).\] Define its kernel $ker(i^+)$ to be the space of \textit{new (at $l$) forms}.

The level raising question is to  find common Hecke support for $im(i)$ and $ker(i^+)$. The space of old forms $im(i)$ is relative easier to describe while $ker(i^+)$ is a little mysterious at present. The basic strategy (by Ribet, Taylor, Newton...) is to employ the previous pairing to translate $ker(i^+)$ into  dual side. Next section we will  do such duality arguments.

Use the same double coset operators, we get maps in the dual side: \[j:L_0^* \oplus L_1^* \longrightarrow M^*,\] \[j^+:M^*\longrightarrow L_0^* \oplus L_1^*.\]  In the classical setting, the space of automorphic forms with fixed level and weight is finite dimensional and self dual under the previous pairing, the map $i^+$ is the dual map for $i$. But here the map $j^+$   is the dual map for $i$, and the map $j$ is the dual map for $i^+$:
\[(i(f_0,f_1),\lambda)=((f_0,f_1),j^+(\lambda)),\]  \[(i^+(f),(\lambda_0,\lambda_1))=(f,j(\lambda_0,\lambda_1)).\]

The starting point of level raising is to compute the composition $i^+ \circ i$. It is an endomorphism of $L_0 \oplus L_1$ by a  ('level changing') matrix (acts from the right):
\[\begin{pmatrix}
l^3+1 & [\mathcal{U}_0 1 \mathcal{U}_1] \\
[\mathcal{U}_1 1 U_0] & l+1
\end{pmatrix}.\]

We explain a little about this computation. The computation of these double coset operators is purely local about the place $l$. And each operator can be interpreted by some combinatorial operators about the Bruhat-Tits tree. See Section 3.5 of \cite{bellaiche2006u3} for more details. Through  combinatorial argument of the Bruhat-Tits tree, we can compute this matrix easily. Notice that our matrix is different from their matrix (proposition 3.4.5 of \cite{bellaiche2006u3}), this is because we're using two conjugacy class of maximal open compact subgroups of $G(\Q_l)$ while they only used hyperspecial subgroups. More concretely, their level raising map (section 5.3.6 of \cite{bellaiche2006u3}) is for $L_0 \oplus L_0 \longrightarrow M$ by the matrix
 \[
 \begin{pmatrix}
 1  \\
  [\mathcal{U}_0 1\mathcal{U}_1]\cdot [\mathcal{U}_1 1 \mathcal{V}]-[\mathcal{U}_01\mathcal{V}]
 \end{pmatrix}.
 \] In the last section (see section \ref{localpicture}), we explain the  reason why our definition (use $L_0 \oplus L_1$ instead of $L_0\oplus L_0$) is better.

What's more, through the help of lemma 3.5.3 (1) of \cite{bellaiche2006u3} (or compute it directly), we find the composition of the following map \[L_0 \xrightarrow{[\mathcal{U}_0 1 \mathcal{U}_1]}L_1 \xrightarrow{[\mathcal{U}_11\mathcal{U}_0]}L_0\] is exactly \[T_l+(l^3+1).\]

Apply these computation to the  dual side, obviously we get the same matrix for $j^+ \circ j$. And the  composition of analogous above maps is \textbf{still} $T_l+(l^3+1)$. According to our notation, a single local Hecke operator will act on $S_{X}(U_0,r)$ and $D_X(U_0,r)$ through two different double operators, their representative elements are inverse to each other. But in this situation, these two double coset are indeed the same. This can also be obtained from the uniqueness  of the generator for the spherical Hecke algebra.

In \cite{newton2016level}, Newton observed a useful lemma (see lemma 2.12 in that paper) in commutative algebra, which will help us to compute dimension of certain locus of eigenvarieties. We briefly recall his lemma:

\emph{Let $R_0$ denote a Noetherian domain which is normal and equidimensional of dimension $d$. Suppose $R_1$ is an $R_0$-algebra which is integral over $R_0$ and torsion free. Then $R_1$ is equidimensional of dimension $d$.}

Now we can prove the following result, which is crucial to do duality arguments in later sections.

\begin{prop}
\label{injection}
If $X\hookrightarrow \mathcal{W}$ is an irreducible reduced affinoid which is admissible open in $\mathcal{W}$, then the map $i^+ \circ i$ is injective.
\end{prop}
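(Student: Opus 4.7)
The plan is to reduce the injectivity of $i^+\circ i$ on $L_0\oplus L_1$ to the injectivity of a single Hecke operator on $L_0$, and then to attack the latter at Zariski-densely many classical fibers using the abelian Ihara lemma.

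First I would unpack the matrix explicitly. If $(f_0,f_1)\in\ker(i^+\circ i)$, the two component equations are
\[(l^3+1)f_0+f_1|[\mathcal{U}_11\mathcal{U}_0]=0,\qquad f_0|[\mathcal{U}_01\mathcal{U}_1]+(l+1)f_1=0.\]
Solving the second for $f_1=-(l+1)^{-1}f_0|[\mathcal{U}_01\mathcal{U}_1]$ and substituting into the first, using the identity $[\mathcal{U}_01\mathcal{U}_1][\mathcal{U}_11\mathcal{U}_0]=T_l+(l^3+1)$ recorded just before the statement, the pair of equations collapses to the single relation $T_lf_0=l(l^3+1)f_0$ in $L_0$. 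Moreover the formula for $f_1$ shows that $f_0$ determines $f_1$ uniquely, so it suffices to prove that the operator $T_l-l(l^3+1)$ is injective on $L_0=\mathcal{S}_X(\mathcal{U}_0,r)^Q$.

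Next I would reduce to classical fibers. Since $L_0$ is a finitely generated projective $O(X)$-module, a nonzero kernel element produces a nonzero element in some fiber $L_0\otimes_{O(X)}k(\chi)$, and in fact in a Zariski dense set of such fibers. The admissible open $X\hookrightarrow\mathcal{W}$ is $3$-dimensional and the locus of central weights $\chi_1=\chi_2=\chi_3$ is of positive codimension, so the classical points of algebraic, dominant, \textbf{non-central} weight are Zariski dense in $X$. By the base change property of the slope decomposition in Section \ref{sectionslope}, at such a classical $\chi$ the fiber is a finite dimensional $k(\chi)$-vector space containing classical automorphic forms of weight $\chi$ cut out by $Q$; on it the tame Hecke algebra $\mathbb{T}^{S_0}$ is self-adjoint with respect to the Petersson pairing on the finite set $G(\Q)\backslash G(\A_f)/\mathcal{U}_0$, hence acts semisimply. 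Decompose the specialization of $f_0$ into a sum $\sum_ic_ig_i$ of Hecke eigenforms; each $g_i$ still satisfies $T_lg_i=l(l^3+1)g_i$.

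The heart of the argument is then a local-to-global rigidity claim: every such $g_i$ must be \emph{abelian}, i.e.\ its underlying automorphic representation $\pi$ factors through $\det:G\to G^{ab}$. Locally at $l$, the eigenvalue $l(l^3+1)$ of the spherical generator $T_l$ for $U(3)(\Q_l)$ (at inert $l$, with respect to the hyperspecial vertex) matches precisely the value attached to the unramified one-dimensional characters $\psi\circ\det$, and an explicit inspection of the Satake parameters shows that no other irreducible unramified constituent of the corresponding reducible principal series shares this eigenvalue. Globally, a cuspidal automorphic representation of the definite $U(3)$ is generic at almost every place and in particular cannot have a one-dimensional $l$-component, so $g_i$ must lie in the one-dimensional part of the discrete spectrum, i.e.\ $g_i$ itself has the form $\psi\circ\det$. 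Now applying Theorem \ref{ihara}(2) to the abelian form $g_i$ forces its weight to be central, contradicting our choice of non-central $\chi$. Hence all $g_i$ vanish, so $f_0\otimes1=0$ at Zariski-densely many classical fibers, and therefore $f_0=0$ in $L_0$; the proposition follows.

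The main obstacle is the local-to-global rigidity in the previous paragraph: converting the spectral condition $T_l=l(l^3+1)$ into the representation-theoretic statement that the relevant constituent of the degenerate principal series of $U(3)(\Q_l)$ is abelian, and then upgrading ``$\pi_l$ is one-dimensional'' to ``$\pi$ is globally abelian'' without invoking Ramanujan--Petersson (which, as noted in the introduction, fails on the endoscopic components of the $U(3)$ eigenvariety). It is precisely at this step that the abelian Ihara lemma replaces the Ramanujan bound used by Newton in the $GL(2)$-setting.
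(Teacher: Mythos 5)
Your opening reduction is fine and agrees with the first step of the paper's proof: the two component equations give $(T_l-l(l^3+1))f_0=0$, and since $f_1$ is determined by $f_0$, injectivity of $i^+\circ i$ is equivalent to injectivity of $T_l-l(l^3+1)$ on $L_0$. The genuine gap is at the specialization step. The fiber $L_0\otimes_{O(X)}k(\chi)=\mathcal{S}_{\chi}(\mathcal{U}_0,r)^Q$ at an algebraic dominant weight $\chi$ is a space of overconvergent forms of slope bounded by the datum $Q$; it is \emph{not} a space of classical forms unless a small-slope classicality criterion holds (slope small relative to $\chi$), and you never invoke such a criterion. Consequently you cannot apply the Petersson/semisimplicity argument, which concerns the finite-dimensional space of classical automorphic forms sitting inside the semisimple smooth $G(\A_f)$-representation, to the specialization of $f_0$, nor can you read off its local component at $l$ from the classification of unramified representations of $U(3)(\Q_l)$: that representation theory only applies to classical vectors. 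The argument could be repaired by choosing $\chi$ in $X$ sufficiently regular relative to the slopes occurring in $Q$ (so that the entire $Q$-part at weight $\chi$ is classical) and intersecting with the Zariski-open nonvanishing locus of $f_0$, but this classicality input is exactly what is missing, and it is why the paper argues differently: it uses Newton's commutative-algebra lemma (torsion-freeness plus equidimensionality of the Hecke algebra acting on the image $L$ of the kernel) to show that a nonzero kernel would force the locus $T_l=l(l^3+1)$ to contain an entire irreducible component of $\mathcal{E}(\mathcal{U}_0)$, and then invokes Zariski density of classical points on the eigenvariety (corollary 3.13.3 of \cite{loeffler2011overconvergent}), where classicality is already built in, to produce classical eigenforms of non-central weight with this eigenvalue.

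A second, smaller problem is your ``local-to-global rigidity''. For this anisotropic $U(3)$ the claim that a cuspidal automorphic representation is generic at almost every place is not available: endoscopic/CAP representations are non-tempered and non-generic at almost all places, and avoiding precisely this kind of input is the point of replacing Ramanujan--Petersson here. What is true, and what the paper uses, is softer: once the eigenvalue $l(l^3+1)$ forces $\alpha=l^{\pm 2}$ and hence the local component at $l$ to be the one-dimensional constituent (lemma 3.7.1 and section 3.6 of \cite{bellaiche2006u3}), the form is right-invariant under $G^{der}(\Q_l)$, and strong approximation for the simply connected $G^{der}$ (using that $G^{der}(\Q_l)$ is noncompact) upgrades this to invariance under all of $G^{der}(\A_f)$, i.e.\ the form is abelian; Theorem \ref{ihara}(2) then forces the weight to be central, contradicting the choice of non-central $\chi$. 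With these two repairs (classicality at well-chosen weights, and strong approximation in place of genericity) your fiberwise argument would become a correct alternative to the paper's eigenvariety-component argument.
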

\begin{proof}

We adapt Newton's proof (see proposition 2.13 of \cite{newton2016level}), replacing the application of the generalized Petersson-Ramanujan conjecture with two key tools: (1) our abelian Ihara lemma (Theorem \ref{ihara}), and (2) the semisimplicity of classical automorphic representations.

Let $\widetilde{L}$ denote the $O(X)$-module $ker(i^+ \circ i)$ and $L$ denote its image inside $L_0$ under the first projection. Let $\mathbf{H}_{L_0}$ denote the image of $\mathbb{T}^{S_0}\otimes \mathcal{H}_p^{-} \otimes O(X)$ in $End_{O(X)}(L_0)$.  Notice that the Hecke algebra $\mathbb{T}^{S_1}\otimes \mathcal{H}_p^{-}$ acts on $L$. At present we don't know whether $T_l$ also stables $L$, but we will verify this quickly.

If $\widetilde{L} \neq 0$, for any $(f_0,f_1)\in \widetilde{L}$, we have \[(l^3+1)f_0+f_1|[\mathcal{U}_11\mathcal{U}_0]=0,\] \[f_0|[\mathcal{U}_01\mathcal{U}_1]+(l+1)f_1=0.\] Combine them together, we get \[(T_l-l(l^3+1))f_0=0.\] In particular, the Hecke  operator $T_l$ acts  on $L$ through the scalar $l(l^3+1)$. Then let $\mathbf{H}_{L}$ denote the image of $\mathbb{T}^{S_0}\otimes \mathcal{H}_p^{-} \otimes O(X)$ in $End_{O(X)}(L)$, through the restriction from $L_0$ to $L$, we get a surjection \[\mathbf{H}_{L_0}\twoheadrightarrow \mathbf{H}_{L}.\]   On the other hand, because $L_0$ is a torsion free $O(X)$-module, so does $L$, $\mathbf{H}_{L}$ and $(\mathbf{H}_{L})^{red}$. Then use Newton's lemma, we find that $\mathbf{H}_{L}^{red}$ is equidimensional of dimension $O(X)$ (which is $n$). Then we have a closed immersion between reduced rigid spaces \[Sp(\mathbf{H}_{L}^{red})\hookrightarrow Sp(\mathbf{H}_{L_0}^{red})\] with the same dimension and are all equidimensional. Then its image is a finite union of irreducible components. Apply the eigenvariety machine, the zero locus of the Hecke operator $T_{l}-l(l^3+1)$ will contain an irreducible component of the eigenvariety $\mathcal{E}(\mathcal{U}_0)$. Next we will show this is impossible.

For any classical point $z_0$ inside this irreducible component, let $x_0$ denote the classical (locally algebraic) weight. Let level subgroups vary, and the resulting whole space of automorphic forms is canonically isomorphic to a space of automorphic representations of $G(\A_f)$, which is semisimple. See \cite{loeffler2011overconvergent} proposition 3.8.1 and theorem 3.9.2 for more details. In particular, let $f$ denote a $p$-adic Hecke eigenform inside $\mathcal{S}_{x_0}(\mathcal{U},r)^Q$ corresponding to this classical point $z_0$. Through the above identification of two whole spaces, we can realize $f$ canonically as a vector inside the automorphic representation $\oplus_{k}\Pi_{k}$ (each $\Pi_k$ is an irreducible representation of $G(\A_f)$ and this is a finite sum). And  each $\Pi_k$ further decompose as $\Pi_{k,l}\otimes \widetilde{\Pi_{k,l}}$, where $\Pi_{k,l}$ is an irreducible representation of $G(\Q_l)$ and $\widetilde{\Pi_{k,l}}$ is an irreducible representation of $G(\A_{f}^{l})$. Because $f$ is invariant under the right multiplication by $\mathcal{U}_{0,l}$. The representation $\Pi_{k,l}$ is unramified with respect to the hyperspecial subgroup $\mathcal{U}_{0,l}$. Such an representation is determined by its Satake parameter and the corresponding full principal series is also classified clearly. See section 3.6 and section 3.7 of \cite{bellaiche2006u3} for a list of such results. They denote the Satake parameter by a number $\alpha$ (the group $G(\Q_l)$ is rank one). The previous discussion shows that the eigenvalue of $T_l$ is $l(l^3+1)$, and the corresponding $\alpha$ is $l^{\pm2}$ by lemma 3.7.1 of \cite{bellaiche2006u3}. However, such a Satake parameter is degenerate and the corresponding full principal series is reducible, with one Jordan Holder factor being a character and another one Jordan Holder factor being the Steinberg representation. Thus the unramified representation $\Pi_{k,l}$ is a \textbf{character}. In particular, $f$ is invariant under the $G^{der}(\Q_l)$ action.

  View $f$ as a continuous function on $G(\A_f)$ and it is invariant under left multiplication by $G^{der}(\Q)$ and right multiplication by $G^{der}(\Q_l)$. Through the strong approximation theorem for $G^{der}(\A_f)$ ($G^{der}$ is simply connected), $G^{der}(\Q)G^{der}(\Q_l)$ is dense in $G^{der}(\A_f)$, then the form $f$ is invariant under multiplication by $G^{der}(\A_f)$ and thus it is an abelian form. Now we can apply the previous abelian Ihara lemma to $f$. In particular, the abelian property forces the weight $x_0$ to be \textbf{central}.

However, such central weights only occupy a proper (one dimensional) Zariski closed subspace of the (three dimensional) whole weight space $\mathcal{W}$. The complement is a Zariski open subspace of $\mathcal{W}$ with plenty of classical points. And we can apply the Zariski density of classical points (see corollary 3.13.3 of \cite{loeffler2011overconvergent}) to get many classical points in this irreducible component of $\mathcal{E}(\mathcal{U}_0)$ with non-central weights, which is a contradiction.

Therefore the kernel of $i^+ \circ i$ is zero, we're done.

\end{proof}

This proposition shows that an irreducible components of the eigenvariety can't be both new and old, which is fine. But for a point, it maybe both new and old. And level raising questions is about such phenomenon. Moreover, the above proof shows that such an exotic point is with a non-central weight is  not classical.

\subsection{some duality results}
\label{duality}
In this section, we assume the reduced irreducible $X$ is an admissible open affinoid in $\mathcal{W}$. Let $F(X)$ denote the fraction field of $O(X)$. For the $O(X)$-module $L_0$, for simplicity we will use $L_{0,F(X)}$ to denote $L_0 \otimes_{O(X)}F(X)$ and similarly for other $O(X)$-modules.

First, we remark that the injectivity of $i^{+}\circ i$ implies the injectivity of $j^+  \circ j$: Notice that each $O(X)$-module $L_0$, $L_1$ and their dual are torsion free. The $F(X)$-vector space $(L_0\oplus L_1)_{F(X)}$ is finite dimensional, thus the injection $i^{+} \circ i$ is indeed an isomorphism on this vector space. Then the dual map $j^  \circ j$ is also injective on the dual vector space. Because $L_0 ^* \oplus L_1 ^*$ is torsion free, thus $j^+ \circ j$ is an injective endomorphism. In particular, the map $i$ and $j$ are injective.

Similar to Newton's ideas, we introduce some auxiliary modules and certain (perfect) pairing. We define the following two chains of modules:
\[\gamma_0=L_0 \oplus L_1,\  \widetilde{\gamma_0}=L_0 ^* \oplus L_1 ^*; \] \[\gamma_1=i^+(M),\ \widetilde{\gamma_1}=j^+(M^*);\]
\[\gamma_2=i^+(M \cap i(\gamma_{0,F(X)})),\ \widetilde{\gamma_2}=j^+(M^* \cap j(\widetilde{\gamma_0}_{F(X)}));\]
\[\gamma_3=i^{+}i(\gamma_0), \ \widetilde{\gamma_3}=j^+ j(\widetilde{\gamma_0}).\]

We have $\gamma_3 \subset \gamma_2 \subset \gamma_1 \subset \gamma_0$ and \[\gamma_2/\gamma_3=i^+(\frac{M\cap i(\gamma_0)_{F(X)}}{i(\gamma_0)})=i^+(M/(i(\gamma_0))^{tors}),\] and similar conclusions for the dual side. In next section, we will see that the torsion module $\gamma_2/\gamma_3$ is closely related with the abelian Ihara lemma.

Start from the previous perfect pairing on $\gamma_0 \times \widetilde{\gamma_0}$, we further produce some other perfect pairing. Each pairing will be equivariant respect to the tame Hecke $\mathbb{T}^{S_1}$ action.

Combine with perfect pairing $\gamma_{0,F(X)} \times \widetilde{\gamma_{0}}_{F(X)}\longrightarrow F(X)$ and the injection $j$ (thus identify $\widetilde{\gamma_{0}}_{F(X)}$  and $j(\widetilde{\gamma_{0}}_{F(X)})$), we get a pairing \[\gamma_0 \times (M^* \cap j(\widetilde{\gamma_{0}}_{F(X)})) \longrightarrow F(X)/O(X),\] and further the following pairing \[P_1:\gamma_0/\gamma_1 \times \frac{M^* \cap j(\widetilde{\gamma_0}_{F(X)})}{j(\widetilde{\gamma_0})}\longrightarrow F(X)/O(X).\] This pairing is perfect due to lemma 6 of \cite{newton2011geometric}. We quickly recall it here:

\textit{The  pairing on} $\gamma_{0,F(X)} \times \widetilde{\gamma_{0}}_{F(X)}$ \textit{induces natural isomorphisms} \[(\gamma_1)^* \cong M^* \cap j(\widetilde{\gamma_{0}}_{F(X)})\ and \ (\gamma_0)^* \cong j(\widetilde{\gamma_{0}}).\]

Similarly we get the second pairing: \[P_2: \frac{M\cap i(\gamma_{0,F(X)})}{i(\gamma_0)}\times \widetilde{\gamma_0}/\widetilde{\gamma_1}\longrightarrow F(X)/O(X).\] Finally combine with the perfect pairing on $M \times M^*$, we get the third  pairing \[P_3:ker(i^+) \times \frac{M^*}{M^*\cap j(\widetilde{\gamma_0}_{F(X)})}\longrightarrow O(X),\] which identify $ker(i^+)$ with the  $O(X)$-dual module of $\frac{M^*}{M^*\cap j(\widetilde{\gamma_0}_{F(X)})}$.

We refer to section 2.8 of \cite{newton2011geometric}  for more details. Although Newton worked with definite quaternion algebras, his proof holds in general.

Here we make some remark. In next section we will introduce the notation of very Eisenstein modules. The abelian Ihara lemma will imply that torsion module like $(M/i(\gamma_0))^{tors}$ is very Eisenstein. In particular, the quotient $\gamma_2/\gamma_3$ is also very Eisenstein. Apply the the pairing $P_1$, this nice duality shows that $\gamma_0/\gamma_1$ is also very Eisenstein. So up to such very Eisenstein modules, the module $\gamma_0/\gamma_3$ is 'close' to the quotient $\gamma_1/\gamma_2$. The duality by pairing $P_3$ will relate $ker(i^+)$ with the module $\widetilde{\gamma_1}/\widetilde{\gamma_2}$. So the study of $\gamma_0/\gamma_3$ provides a bridge connect old forms $im(i)$ with new forms $ker(i^+)$. The interested readers can find the archetype of such ideas in the classical study of \textit{congruence modules}. For example see \cite{ribet1983congruence} and \cite{taylor1989galois} for more details.

\subsection{very Eisenstein modules}

Following the notation of \cite{newton2016level} (see section 2.11 of that paper), we introduce the concept of \textit{very Eisenstein}. The definition is inspired by the abelian Ihara lemma.

As we have specified to $n=3$. We can make the local Hecke algebra more explicitly and compute the related $deg$ function for Hecke operators. For any prime $q$ that splits in $E$ and $q \notin S_0$, the group $G(\Q_q)$ splits as $GL(3,\Q_q)$ and $\mathcal{U}_q$ is conjugated to $GL(3,\Z_q)$. For  the standard spherical Hecke algebra  $H(GL(3,\Q_q),GL(3,\Z_q))$ with respect to $GL(3,\Z_q)$, there are three distinguished Hecke operators $T_{q,i}(1\leq i \leq 3)$, where $T_{q,1}$ corresponds to the double coset $GL(3,\Z_q)diag(q,1,1)GL(3,\Z_q)$ and similarly for $T_{q,2}$ and $T_{q,3}$. Then for any coefficient ring $R_0$ containing $\Q(q^{\pm\frac{1}{2}})$, through the Satake isomorphism, the spherical Hecke algebra is isomorphic to $R_0[T_{q,1},T_{q,2},T_{q,3}^{\pm1}]$. So in most of time, we can restrict to these three Hecke operators to describe this local Hecke algebra. Through the conjugation, $\mathcal{U}_q$ is isomorphic to $GL(3,\Z_q)$. The Hecke algebra $\mathcal{H}_q$ respect to $\mathcal{U}_q$ is thus isomorphic to the standard spherical Hecke algebra. We still use $T_{q,i}$ to denote the corresponding elements. And easy to see that $deg(T_{q,1})=deg(T_{q,2})=\frac{q^3-1}{q-1}=1+q+q^2$ and $deg(T_{q,3})=1$.

Still use previous notations like $X$ and $M$ etc. Let $\mathbf{H}$ denote the image of $O(X)\otimes \mathbb{T}^{S_1}$ in $End_{O(X)}(M)$. Let $M_0$ denote an $\mathbf{H}$-module which is finitely generated over $O(X)$. We call $M_0$ is \textit{very Eisenstein} if for  each prime ideal $\mathfrak{b}$ of $\mathbf{H}$ in the support of $M_0$ with $\mathfrak{p}= \mathfrak{b}\cap  O(X)$, we have:

$\bullet$ the induced weight $T(\Z_p)\longrightarrow (O(X)/\mathfrak{p})^*$ is central.

$\bullet$ there exists a finite etale map $O(X)/\mathfrak{p}\longrightarrow R$, a finite abelian extension $E\longrightarrow \widetilde{E}$ and a $p$-adic continuous character $\psi: G^{ab}(\Q) \backslash G^{ab}(A_f)/\det(\mathcal{U}^p)\longrightarrow R^*$ such that if $q \notin S_0$ is a prime that splits in $\widetilde{E}$, then $\psi(\pi_q) \in (O(X)/\mathfrak{p})^*$ and inside $\mathbf{H}/\mathfrak{b}$, we have \[T_{q,1}-\frac{q^3-1}{q-1}\psi(\pi_q)^{-1}=0,\] \[T_{q,2}-\frac{q^3-1}{q-1}\psi(\pi_q)^{-2}=0,\] \[T_{q,3}-\psi(\pi_q)^{-3}=0.\]




With some basic properties of very  Eisenstein modules (see lemma 2.21 of \cite{newton2016level}) and the abelian Ihara lemma, we can conclude some lemmas regarding modules in the previous section.

\begin{lem}
Let $Y\hookrightarrow X$ be a closed, reduced and irreducible sub-affinoid. Then the module $Tor_1^{O(X)}(M/i(\gamma_0),O(Y))$ is very Eisenstein and the module $Tor_1^{O(X)}(M^*/j(\widetilde{\gamma_0}), O(Y))$ is 0.
\end{lem}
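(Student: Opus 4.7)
The plan is to compute both Tor groups as kernels of base-changed level-raising maps, extract a common Hecke constraint $T_l = l(l^3+1)$, and then apply theorem \ref{ihara}, where the two parts of the abelian Ihara lemma produce asymmetric conclusions on the two sides.

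First, since $L_0$, $L_1$, $M$ and their duals are finite projective $O(X)$-modules by the slope decomposition of section \ref{sectionslope}, and since $i$ (and dually $j$) is injective by proposition \ref{injection}, the long exact Tor sequences attached to
\[
0\longrightarrow \gamma_0 \xrightarrow{i} M \longrightarrow M/i(\gamma_0)\longrightarrow 0
\]
and its dual analogue collapse after tensoring with $O(Y)$, producing
\[
\mathrm{Tor}_{1}^{O(X)}\bigl(M/i(\gamma_0),O(Y)\bigr)\cong \ker(i_Y),\qquad \mathrm{Tor}_{1}^{O(X)}\bigl(M^*/j(\widetilde{\gamma_0}),O(Y)\bigr)\cong \ker(j_Y),
\]
where $i_Y,j_Y$ are the base-changed level-raising maps, after identifying $L_i\otimes_{O(X)}O(Y)\cong \mathcal{S}_Y(\mathcal{U}_i,r)^Q$ and its dual via the base change property at the end of section \ref{sectionslope}.

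Next, I would extract the Hecke constraint. For $(f_0,f_1)\in\ker(i_Y)$, composing with $i_Y^+$ and reading off the level-changing matrix of section \ref{oldandnew} produces
\[
(l^3+1)f_0 + f_1|[\mathcal{U}_1 1 \mathcal{U}_0]=0,\qquad f_0|[\mathcal{U}_0 1\mathcal{U}_1] + (l+1)f_1=0;
\]
substituting (using that $l+1$ is a unit in $O(Y)$) together with the identity $[\mathcal{U}_0 1\mathcal{U}_1][\mathcal{U}_1 1\mathcal{U}_0] = T_l + (l^3+1)$ forces $T_lf_0=l(l^3+1)f_0$, and similarly for $f_1$. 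The identical computation on the dual side gives the same annihilator on each component of any element of $\ker(j_Y)$.

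For $\ker(i_Y)$ I would then verify the very Eisenstein condition prime-by-prime: for any prime $\mathfrak{b}\subset\mathbf{H}$ in its support, Zariski density of classical points in the corresponding closed subspace of the eigenvariety (as in proposition \ref{injection}) produces a classical eigenform $f$ with $T_l=l(l^3+1)$; by lemma 3.7.1 of \cite{bellaiche2006u3}, the only unramified irreducible representation of $G(\mathbb{Q}_l)$ carrying a $\mathcal{U}_{0,l}$-fixed vector with this Hecke eigenvalue is the character constituent of the reducible principal series (the Steinberg constituent has no spherical vector); strong approximation for $G^{der}$ then makes $f$ abelian, and theorem \ref{ihara}(2) supplies the central weight, the étale cover, the anticyclotomic extension $\widetilde{E}$, and the character $\psi$, whose values on $T_{q,1},T_{q,2},T_{q,3}$ at split $q$ match exactly the very Eisenstein formulas. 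On the dual side the same representation-theoretic chain shows that each classical specialization of an element of $\ker(j_Y)$ is invariant under $G^{der}$ and hence factors through $\det$; the stronger statement theorem \ref{ihara}(1) then kills it outright, giving $\ker(j_Y)=0$, which accounts for the asymmetry between the two conclusions. The main obstacle is promoting this pointwise abelianness to a uniform module-theoretic statement; I expect this is handled exactly as in Newton's proof of the quaternion-algebra analogue in \cite{newton2016level}, using Zariski density of classical specializations in the Hecke support together with the closedness of the abelian / very Eisenstein condition.
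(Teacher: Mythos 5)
Your first step is exactly the paper's: since $\gamma_0$, $\widetilde{\gamma_0}$, $M$, $M^*$ are finite projective over $O(X)$ and $i$, $j$ are injective, tensoring the short exact sequences with $O(Y)$ identifies the two Tor modules with $\ker(i_Y)$ and $\ker(j_Y)$. But from there your route has a genuine gap. You propose to establish abelianness of elements of $\ker(i_Y)$ (and of $\ker(j_Y)$) by finding classical eigenforms with $T_l=l(l^3+1)$ via ``Zariski density of classical points in the corresponding closed subspace of the eigenvariety,'' and then running the principal-series argument. That density argument is only available in the situation of proposition \ref{injection}, where the relevant module is torsion free over $O(X)$ with $X$ admissible open in $\mathcal{W}$, so its Hecke support is equidimensional of full dimension and hence a union of irreducible components of $\mathcal{E}(\mathcal{U}_0)$, on which classical points are dense. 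Here $Y$ is an arbitrary closed irreducible sub-affinoid (possibly a single point, possibly of central weight), and $\ker(i_Y)$ is precisely the torsion obstruction to base change; its support has no reason to be a union of components or to contain a single classical point. Indeed the paper's main theorem shows that points satisfying $T_l=l(l^3+1)$ with non-central weight are non-classical, so in exactly the cases of interest your supply of classical eigenforms is empty, and the ``uniform module-theoretic statement'' you flag at the end cannot be recovered this way.

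The paper's proof needs none of this. If $i_Y(f_0,f_1)=0$ then $f_0=-f_1$ inside $\mathcal{S}_Y(\mathcal{V},r)$, so this single $p$-adic form is invariant under right multiplication by both $\mathcal{U}_{0,l}$ and $\mathcal{U}_{1,l}$; these generate a subgroup of $G(\Q_l)$ containing $G^{der}(\Q_l)$, and strong approximation for the simply connected group $G^{der}$ then makes the form invariant under all of $G^{der}(\A_f)$, i.e.\ abelian, with no classicality, no Hecke-eigenvalue extraction, and no appeal to the representation theory of unramified principal series. Theorem \ref{ihara}(2) applied directly to this $p$-adic form gives the very Eisenstein conclusion for $\ker(i_Y)$, and the identical invariance argument on the dual side together with Theorem \ref{ihara}(1) gives $\ker(j_Y)=0$. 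So the asymmetry you correctly anticipate does come from the two halves of the abelian Ihara lemma, but the abelianness itself is forced by the simultaneous level structures at $l$, not by classical specialization; replacing your middle section by this direct observation closes the gap.
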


\begin{proof}

The proof is similar to lemma 2.22 and lemma 2.23 of \cite{newton2016level}.

As $i$ is injective, we have an  exact sequence \[0\longrightarrow\gamma_0 \longrightarrow M \longrightarrow M/i(\gamma_0)\longrightarrow0.\]
Because $\gamma_0$ and $M$ are finitely generated projective $O(X)$-modules (thus flat), and apply the functor $(-)\otimes_{O(X)}O(Y)$, we get an exact sequence \[0 \longrightarrow Tor_1^{O(X)}(M/\gamma_0,O(Y))\longrightarrow \gamma_0 \otimes_{O(X)}O(Y)\xrightarrow{i_Y} M \otimes_{O(X)}O(Y).\]

Identify $\gamma_0 \otimes_{O(X)}O(Y)$ with $\mathcal{S}_{Y}(\mathcal{U}_0,r)^{Q}\oplus \mathcal{S}_{Y}(\mathcal{U}_1,r)^{Q}$ and $M \otimes_{O(X)}O(Y)$ with $\mathcal{S}_{Y}(\mathcal{V},r)$, the above map is exactly the level raising map $i_Y$ for weight $Y$. It is enough to show that its kernel is very Eisenstein.

Suppose $i_Y(f_0,f_1)=0$. Then both $f_0$ and $f_1$ are invariant under the right multiplication by $\mathcal{U}_{0,l}$ and $\mathcal{U}_{1,l}$. Notice that $\mathcal{U}_{0,l}$ and $\mathcal{U}_{1,l}$ will generate a subgroup of $G(\Q_l)$ containing $G^{der}(\Q_l)$. Thus $f_0$ and $f_1$ are invariant under right multiplication by $G^{der}(\Q_l)$. Then use the same argument in the proof of proposition \ref{injection}, the forms $f_0$ and $f_1$ are invariant under multiplication by $G^{der}(\A_f)$, now we can the abelian Ihara lemma to them and get the  desired result for $Tor_1^{O(X)}(M/i(\gamma_0),O(Y))$. Apply these arguments in the dual side, we find $Tor_1^{O(X)}(M^*/j(\widetilde{\gamma_0}), O(Y))=0$.

\end{proof}

With some arguments about commutative algebra, we can upgrade this lemma into the following form:

\begin{lem}
(1) The module $(M/i(\gamma_0))^{tors}$ is very Eisenstein.

(2) The module $(M^*/j(\widetilde{\gamma_0}))^{tors}$ is 0.
\end{lem}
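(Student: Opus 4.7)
The plan is to propagate the ``pointwise'' $Tor_1$ information supplied by the previous lemma to the torsion submodule of $M/i(\gamma_0)$ (resp.\ of $M^{*}/j(\widetilde{\gamma_0})$), exploiting two structural facts: the very Eisenstein condition depends only on the $\mathbf{H}$-support of the module and is therefore preserved under subquotients and extensions, and the Eisenstein data $(\widetilde{Y},\widetilde{E},\psi)$ furnished by Theorem \ref{ihara} pulls back cleanly from an irreducible closed sub-affinoid to any smaller one.

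Part (2) is the cleaner of the two, so I would dispatch it first. Applying the previous lemma to $Y=\{x\}$ a single reduced closed rigid point of $X$ yields $Tor_1^{O(X)}(N^{*},k(x))=0$ for every such $x$, where I write $N^{*}:=M^{*}/j(\widetilde{\gamma_0})$. Since $N^{*}$ is finitely generated over the Noetherian ring $O(X)$ (being a quotient of the finitely generated module $M^{*}$), the local criterion of flatness then forces $N^{*}$ to be flat, and finitely generated plus flat over a Noetherian ring is projective. Because $X$ is irreducible and reduced, $O(X)$ is a Noetherian integral domain, and finitely generated projective modules over a domain are torsion-free. Hence $(N^{*})^{tors}=0$.

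For part (1), write $N:=M/i(\gamma_0)$ and let $\mathfrak{b}\subset \mathbf{H}$ be any prime in the $\mathbf{H}$-support of $N^{tors}$, with $\mathfrak{p}=\mathfrak{b}\cap O(X)$; because $N^{tors}$ is $O(X)$-torsion, $\mathfrak{p}$ is a nonzero prime. The plan is to pick a minimal associated prime $\mathfrak{p}'\subseteq\mathfrak{p}$ of $N^{tors}$, form the reduced, irreducible, closed sub-affinoid $Y:=\mathrm{Sp}(O(X)/\mathfrak{p}')$, and invoke the previous lemma to conclude that $Tor_1^{O(X)}(N,O(Y))$ is very Eisenstein. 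Localizing the short exact sequence $0\to\gamma_0\to M\to N\to 0$ at $\mathfrak{p}'$ with $A:=O(X)_{\mathfrak{p}'}$, the projectivity of $\gamma_0$ and $M$ gives
\[
Tor_1^{O(X)}(N,O(Y))_{\mathfrak{p}'} \;=\; Tor_1^{A}(N_{\mathfrak{p}'},A/\mathfrak{p}'A) \;=\; \ker\!\bigl((\gamma_0)_{\mathfrak{p}'}/\mathfrak{p}'(\gamma_0)_{\mathfrak{p}'}\longrightarrow M_{\mathfrak{p}'}/\mathfrak{p}'M_{\mathfrak{p}'}\bigr).
\]
Since $\mathfrak{p}'$ is a minimal prime of $\mathrm{Supp}(N^{tors})$, the localization $(N^{tors})_{\mathfrak{p}'}$ is a finite length $A$-module, and a Nakayama-type argument identifies $\mathrm{Supp}_{\mathbf{H}}((N^{tors})_{\mathfrak{p}'})$ with $\mathrm{Supp}_{\mathbf{H}}((N^{tors})_{\mathfrak{p}'}/\mathfrak{p}'(N^{tors})_{\mathfrak{p}'})$, which is visibly contained in $\mathrm{Supp}_{\mathbf{H}}(Tor_1^{O(X)}(N,O(Y))_{\mathfrak{p}'})$. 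Thus $\mathfrak{b}$ belongs to the $\mathbf{H}$-support of $Tor_1^{O(X)}(N,O(Y))$, so $\mathfrak{b}$ satisfies the Eisenstein conditions relative to $Y=V(\mathfrak{p}')$, and restricting the data $(\widetilde{Y},\widetilde{E},\psi)$ from $V(\mathfrak{p}')$ down to the smaller $V(\mathfrak{p})$ yields the Eisenstein data at $\mathfrak{b}$ itself.

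The main obstacle is the commutative-algebra bookkeeping in the preceding paragraph, in particular the Nakayama-type identification of the $\mathbf{H}$-supports of $(N^{tors})_{\mathfrak{p}'}$ and of $Tor_1^{A}(N_{\mathfrak{p}'},A/\mathfrak{p}'A)$. This is handled by choosing $\mathfrak{p}'$ to be a minimal prime of $\mathrm{Supp}(N^{tors})$ (so that the localized torsion module is of finite length and Nakayama applies), together with the explicit kernel description of $Tor_1$ afforded by the projectivity of $\gamma_0$ and $M$; the functoriality in $Y$ of the data $(\widetilde{Y},\widetilde{E},\psi)$ supplied by Theorem \ref{ihara} then handles the propagation of the Eisenstein conditions from the minimal prime back to the arbitrary $\mathfrak{p}$.
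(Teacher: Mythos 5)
Your part (2) is correct, and it takes a genuinely different (and in fact stronger) route than the paper: applying the previous lemma with $Y$ a closed point of $X$ gives $Tor_1^{O(X)}(M^*/j(\widetilde{\gamma_0}),O(X)/\mathfrak{m})=0$ for every maximal ideal $\mathfrak{m}$, hence $M^*/j(\widetilde{\gamma_0})$ is flat, hence projective, hence torsion free over the domain $O(X)$; the paper instead runs the same d\'evissage as in part (1). That argument is fine.

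Part (1), however, has a genuine gap in the support bookkeeping, at exactly the step you flag as the main obstacle. Write $N=M/i(\gamma_0)$, $A=O(X)_{\mathfrak{p}'}$, $\kappa=A/\mathfrak{p}'A$, $T=(N^{tors})_{\mathfrak{p}'}$, and let $F=N_{\mathfrak{p}'}/T$ be the torsion-free quotient. Your Nakayama step does identify $\mathrm{Supp}_{\mathbf{H}}(T)$ with $\mathrm{Supp}_{\mathbf{H}}(T/\mathfrak{p}'T)$, but the claim that the latter is \emph{visibly} contained in $\mathrm{Supp}_{\mathbf{H}}(Tor_1^{A}(N_{\mathfrak{p}'},\kappa))$ is not justified: there is no natural map in the needed direction. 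The long exact sequence for $0\to T\to N_{\mathfrak{p}'}\to F\to 0$ gives $Tor_1^A(N_{\mathfrak{p}'},\kappa)\to Tor_1^A(F,\kappa)\to T\otimes_A\kappa\to N_{\mathfrak{p}'}\otimes_A\kappa$, so $T\otimes_A\kappa$ is caught between $Tor_1^A(F,\kappa)$ and $N_{\mathfrak{p}'}\otimes_A\kappa$, not inside $Tor_1^A(N_{\mathfrak{p}'},\kappa)$. To make the comparison you would need $Tor_1^A(F,\kappa)=0$, i.e.\ $F$ free over $A$; that holds if $\mathfrak{p}'$ has height one (which one can extract from $\mathrm{pd}_{O(X)}N\le 1$ together with regularity of $O(X)$ and Auslander--Buchsbaum), and even then the module controlling the support is the socle-type module $Tor_1^A(T,\kappa)$ rather than $T/\mathfrak{p}'T$ — so this is a missing argument, not mere bookkeeping. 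A second gap: even granting the containment, localizing at $\mathfrak{p}'$ only shows that primes of $\mathrm{Supp}_{\mathbf{H}}(N^{tors})$ \emph{lying over} $\mathfrak{p}'$ lie in $\mathrm{Supp}_{\mathbf{H}}(Tor_1^{O(X)}(N,O(Y)))$; the sentence ``thus $\mathfrak{b}$ belongs to the $\mathbf{H}$-support'' does not follow for your arbitrary $\mathfrak{b}$ over $\mathfrak{p}\supseteq\mathfrak{p}'$, since you would need a prime of $\mathrm{Supp}_{\mathbf{H}}(N^{tors})$ below $\mathfrak{b}$ lying over $\mathfrak{p}'$, and minimal primes of the $\mathbf{H}$-support need not contract to minimal primes of the $O(X)$-support (already for $\mathbf{H}=O(X)\times O(X)$ acting componentwise on $O(X)/(f)\oplus O(X)/\mathfrak{m}$ with $f\in\mathfrak{m}$). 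Consequently the closing ``restrict the data from $V(\mathfrak{p}')$ to $V(\mathfrak{p})$'' has no established statement to restrict from. The paper's route avoids all of this: since $N^{tors}$ is finitely generated and $O(X)$ is a domain, $N^{tors}=N[\alpha]=Tor_1^{O(X)}(N,O(X)/(\alpha))$ for a single nonzero $\alpha$; filtering $O(X)/(\alpha)$ by prime cyclic modules $O(X)/\mathfrak{q}$ and using the long exact sequence of $Tor$ together with the stability of very Eisenstein modules under subquotients and extensions (Newton's lemma 2.21) reduces directly to the previous lemma, with no comparison of $\mathbf{H}$-supports across localizations. I would rewrite part (1) along those lines.
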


We refer to lemma 2.24 of \cite{newton2016level} for the proof. The main idea is to apply some general results about support in commutative algebra. The property of very Eisenstein can be checked via minimal elements in the support. These elements are the same as the minimal elements inside the associated primes. Then it is enough to show that modules like $Tor_1^{O(X)}(M/i(\gamma_0), O(X)/(\alpha))$ ($\alpha$ is nonzero) are very Eisenstein. Apply ideas of devissage, we can pass to consider modules like  $Tor_1^{O(X)}(M/i(\gamma_0), O(X)/\mathfrak{p})$, where $\mathfrak{p}$ is prime ideal. Then this is just the previous lemma. The second statement is proved similarly.

With the help of these lemmas and duality pairings in the previous section, we deduce the following proposition (which is claimed in that section):

\begin{prop}

The modules $\gamma_2/\gamma_3$ and $\widetilde{\gamma_0}/\widetilde{\gamma_1}$ are very Eisenstein. The modules $\gamma_0/\gamma_1$ and $\widetilde{\gamma_2}/\widetilde{\gamma_3}$ are 0.
\end{prop}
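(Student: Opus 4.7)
The plan is to read off all four statements from the previous lemma combined with the three perfect pairings $P_1, P_2, P_3$ and the injectivity of $i^+ \circ i$ established in Proposition \ref{injection}. The strategy is to identify each of the four quotients with a module already known to be $0$ or very Eisenstein, using (i) the torsion identification $(M \cap i(\gamma_{0,F(X)}))/i(\gamma_0) = (M/i(\gamma_0))^{\mathrm{tors}}$, and (ii) Hecke-equivariance of the pairings.

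First I would treat $\gamma_2/\gamma_3$ and $\widetilde{\gamma_2}/\widetilde{\gamma_3}$ directly. Since $i^+ \circ i$ is injective on $\gamma_0$, it is injective on $\gamma_{0,F(X)}$, so $i^+$ is injective on $i(\gamma_{0,F(X)})$, and in particular on the submodule $M \cap i(\gamma_{0,F(X)})$. Therefore $i^+$ induces an isomorphism
\[
\gamma_2/\gamma_3 \;=\; \frac{i^+(M \cap i(\gamma_{0,F(X)}))}{i^+ i(\gamma_0)} \;\cong\; \frac{M \cap i(\gamma_{0,F(X)})}{i(\gamma_0)} \;=\; (M/i(\gamma_0))^{\mathrm{tors}},
\]
which is very Eisenstein by the previous lemma. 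The same argument, applied on the dual side (with $j^+ \circ j$ injective, a consequence of the injectivity of $i^+ \circ i$ recorded at the start of Section \ref{duality}), gives $\widetilde{\gamma_2}/\widetilde{\gamma_3} \cong (M^*/j(\widetilde{\gamma_0}))^{\mathrm{tors}} = 0$.

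Next I would handle $\gamma_0/\gamma_1$ and $\widetilde{\gamma_0}/\widetilde{\gamma_1}$ using the perfect pairings $P_1$ and $P_2$. The pairing $P_1$ identifies $\gamma_0/\gamma_1$ with the Pontryagin-type dual of $(M^* \cap j(\widetilde{\gamma_0}_{F(X)}))/j(\widetilde{\gamma_0}) = (M^*/j(\widetilde{\gamma_0}))^{\mathrm{tors}}$, which is $0$ by the previous lemma. Since the pairing is perfect, this forces $\gamma_0/\gamma_1 = 0$. Dually, $P_2$ identifies $\widetilde{\gamma_0}/\widetilde{\gamma_1}$ with the dual of $(M/i(\gamma_0))^{\mathrm{tors}}$, which is very Eisenstein.

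The one step that needs a little extra argument is deducing that $\widetilde{\gamma_0}/\widetilde{\gamma_1}$ is itself very Eisenstein (rather than just dual to such a module). The point is that being very Eisenstein is a condition on the support over $\mathbf{H}$, and hence only on the $\mathbf{H}$-annihilator. Because $P_2$ is $\mathbb{T}^{S_1}$-equivariant and perfect in the sense of lemma 6 of \cite{newton2011geometric}, the two paired modules share the same annihilator in $\mathbf{H}$, hence the same support; since very Eisenstein only depends on the minimal elements of the support (as used in the proof of the previous lemma via associated primes and d\'evissage), $\widetilde{\gamma_0}/\widetilde{\gamma_1}$ inherits the property. I expect this last support-matching step, together with the verification that Hecke-equivariance of $P_2$ is compatible with the $\mathbf{H}$-action defined in Section \ref{sectionpairing} (where the dual action uses $[\mathcal{U}x^{-1}\mathcal{U}]$), to be the only subtle part; everything else is formal.
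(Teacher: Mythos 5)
Your argument is correct and is essentially the paper's own (largely implicit) deduction: identify $\gamma_2/\gamma_3$ and $\widetilde{\gamma_2}/\widetilde{\gamma_3}$ with the torsion modules $(M/i(\gamma_0))^{\mathrm{tors}}$ and $(M^*/j(\widetilde{\gamma_0}))^{\mathrm{tors}}$ via injectivity of $i^+\circ i$ (resp.\ $j^+\circ j$), and then use the perfect, Hecke-equivariant pairings $P_1$ and $P_2$ together with the preceding lemma. Your closing observation that ``very Eisenstein'' is a support (annihilator) condition, so it transfers across the perfect pairing $P_2$, is exactly the point the paper leaves unstated, and your handling of it is sound.
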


\subsection{raise the level}

Now we can deduce some level raising results.

We define a prime ideal $\mathbf{p}$ of $\mathbf{H}$ to be \textit{very Eisenstein} if the $\mathbf{H}$ module $\mathbf{H/p}$ is very Eisenstein. In particular, for a Hecke eigenform, its system of Hecke eigenvalue will produce a maximal ideal of $\mathbf{H}$, if this ideal is very Eisenstein, we also call such a Hecke eigenform \textit{very Eisenstein}.

We remark that as the name suggests, the property of being very Eisenstein is much more strict than being merely Eisenstein (not cuspidal). For example, the central weight condition cuts off lots of representations, e.g. some Hecke eigenforms  coming from the endoscopy via $U(2)\times U(1)$ (thus not cuspidal) may not be very Eisenstein.

Let $\mathbf{H}_0$ denote the image of $\mathbb{T}^{S_0}\otimes O(X)$ inside $End_{O(X)}(L_0)$. Through the embedding $L_0\hookrightarrow L_0 \oplus L_1 \hookrightarrow M$ (the first map is just the natural inclusion), we get a finite map  (via restriction) $\mathbf{H}\longrightarrow \mathbf{H}_0$. For any ideal $I$ of $\mathbf{H}_0$, let $I_M$ denote the inverse image of $I$ in $\mathbf{H}$. For any finite $\mathbf{H}_0$-module $M_0$, if a prime ideal $\mathbf{p}$ lies in $supp_{\mathbf{H}_0}(M_0)$, then $\mathbf{p}_{M}$ also lies in $supp_{\mathbf{H}}(M_0)$.

We have the following proposition concerning $p$-adic level raising:

\begin{prop}
\label{propraising}
Suppose $\mathbf{p}$ is a prime ideal of $\mathbf{H}_0$ such that $\mathbf{p}_M$ is not very Eisenstein. If we further assume that $\mathbf{p}$ contains $T_l-l(l^3+1)$, then $\mathbf{p}_M$ lies in the support of $\mathbf{H}$-module $ker(i^+)$.
\end{prop}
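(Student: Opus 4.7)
The plan is to analyze the cokernel of the $\mathbf{H}$-linear endomorphism $\widetilde{A} = j^+ \circ j$ of $\widetilde{\gamma_0} = L_0^* \oplus L_1^*$ (which has the same matrix as $A = i^+ \circ i$), and then transfer the resulting support statement to $\ker(i^+)$ using the perfect pairing $P_3$.

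First I would show $\mathbf{p}_M \in \mathrm{supp}_{\mathbf{H}}(\widetilde{\gamma_0}/\widetilde{\gamma_3})$. Since the scalar $l+1$ is a unit, row-reducing the matrix $\bigl(\begin{smallmatrix} l^3+1 & [\mathcal{U}_0 1 \mathcal{U}_1] \\ [\mathcal{U}_1 1 \mathcal{U}_0] & l+1 \end{smallmatrix}\bigr)$ shows that $\widetilde{A}(f_0^*, f_1^*) = 0$ is equivalent to $f_1^* = -f_0^* [\mathcal{U}_0 1 \mathcal{U}_1]/(l+1)$ together with $f_0^* \cdot \bigl[(l+1)(l^3+1) - [\mathcal{U}_0 1 \mathcal{U}_1][\mathcal{U}_1 1 \mathcal{U}_0]\bigr] = 0$, which by the identity $[\mathcal{U}_0 1 \mathcal{U}_1][\mathcal{U}_1 1 \mathcal{U}_0] = T_l + (l^3+1)$ simplifies to $f_0^* \cdot \bigl(l(l^3+1) - T_l\bigr) = 0$. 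Under the level-raising hypothesis this obstruction lies in $\mathbf{p}$, so after tensoring $L_0^*$ with the residue field $\kappa(\mathbf{p})$ of $\mathbf{H}_0$ it vanishes; because $\mathbf{H}_0$ acts faithfully on $L_0^*$, the fiber $L_0^* \otimes_{\mathbf{H}_0} \kappa(\mathbf{p})$ is nonzero, so $\ker(\bar{\widetilde{A}}) \neq 0$, and a dimension count over the finite-dimensional $\kappa(\mathbf{p})$-vector space $\widetilde{\gamma_0} \otimes \kappa(\mathbf{p})$ gives $\mathrm{coker}(\bar{\widetilde{A}}) \neq 0$ as well. Since the map $\mathbf{H} \to \mathbf{H}_0 \to \kappa(\mathbf{p})$ factors through $\mathbf{H}/\mathbf{p}_M$, this nonvanishing descends to $(\widetilde{\gamma_0}/\widetilde{\gamma_3}) \otimes_{\mathbf{H}} \kappa(\mathbf{p}_M) \neq 0$, hence $\mathbf{p}_M$ lies in the $\mathbf{H}$-support.

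Second, from the exact sequence $0 \to \widetilde{\gamma_1}/\widetilde{\gamma_3} \to \widetilde{\gamma_0}/\widetilde{\gamma_3} \to \widetilde{\gamma_0}/\widetilde{\gamma_1} \to 0$, together with $\widetilde{\gamma_0}/\widetilde{\gamma_1}$ being very Eisenstein (from the previous proposition) and the assumption that $\mathbf{p}_M$ is not, I get $\mathbf{p}_M \in \mathrm{supp}_{\mathbf{H}}(\widetilde{\gamma_1}/\widetilde{\gamma_3})$. The surjection $j^+ : M^* \twoheadrightarrow \widetilde{\gamma_1}$ then induces a surjection $M^*/j(\widetilde{\gamma_0}) \twoheadrightarrow \widetilde{\gamma_1}/\widetilde{\gamma_3}$, whose kernel is $\ker(j^+)$ (using the injectivity of $\widetilde{A}$ to check that $\ker(j^+) \cap j(\widetilde{\gamma_0}) = 0$); in particular $\mathbf{p}_M \in \mathrm{supp}_{\mathbf{H}}(M^*/j(\widetilde{\gamma_0}))$.

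Finally, since $(M^*/j(\widetilde{\gamma_0}))^{\mathrm{tors}} = 0$ forces $M^* \cap j(\widetilde{\gamma_0}_{F(X)}) = j(\widetilde{\gamma_0})$, the pairing $P_3$ becomes a perfect, $\mathbf{H}$-equivariant $O(X)$-duality between $\ker(i^+)$ and $M^*/j(\widetilde{\gamma_0})$. Adjointness forces their $\mathbf{H}$-annihilators to coincide, so $\mathrm{supp}_{\mathbf{H}}(\ker(i^+)) = \mathrm{supp}_{\mathbf{H}}(M^*/j(\widetilde{\gamma_0})) \ni \mathbf{p}_M$, as required. The main obstacle is the first step: the level-raising equation lives in $\mathbf{H}_0$ and involves $T_l \notin \mathbf{H}$, so one must carefully exploit the finite extension $\mathbf{H} \to \mathbf{H}_0$, perform the fiber analysis at the prime $\mathbf{p}$ of $\mathbf{H}_0$ where $T_l$ is meaningful, and only then descend the support statement to $\mathbf{p}_M = \mathbf{p} \cap \mathbf{H}$.
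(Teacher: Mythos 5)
Your overall architecture is the paper's: pass to the dual side, show $\mathbf{p}_M$ lies in the support of $\widetilde{\gamma_0}/\widetilde{\gamma_3}$ using the congruence on $T_l$, exclude $\widetilde{\gamma_0}/\widetilde{\gamma_1}$ (very Eisenstein) and $\widetilde{\gamma_2}/\widetilde{\gamma_3}$ (zero) so that $\mathbf{p}_M$ lands in the support of $\widetilde{\gamma_1}/\widetilde{\gamma_2}$, pull this back to a quotient of $M^*$, and conclude via $P_3$. Your second and third steps coincide with the paper's up to bookkeeping: replacing the saturation $M^*\cap j(\widetilde{\gamma_0}_{F(X)})$ by $j(\widetilde{\gamma_0})$ is legitimate by the lemma $(M^*/j(\widetilde{\gamma_0}))^{tors}=0$, and your claim that ``adjointness forces the annihilators to coincide'' is exactly where that torsion-freeness is needed (so that $M^*/j(\widetilde{\gamma_0})$ embeds into its double $O(X)$-dual); the paper uses the same duality implicitly. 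Where you genuinely diverge is the first step: the paper argues by contradiction, localizing at $\mathbf{p}_M$ and then at $\mathbf{p}$, composing $j^+j$ with the surjection $(f_0,f_1)\mapsto -(l+1)f_0+f_1|[\mathcal{U}_11\mathcal{U}_0]$ to exhibit $T_l-l(l^3+1)$ as a surjective endomorphism of $L^*_{0,\mathbf{p}}$, and invoking Nakayama; you instead row-reduce (using that $l+1$ is a unit in the $\Q_p$-algebra $O(X)$) and run a rank--nullity count on a fiber. Both rest on the same identity $[\mathcal{U}_01\mathcal{U}_1][\mathcal{U}_11\mathcal{U}_0]=T_l+(l^3+1)$; your route produces an explicit kernel vector, the paper's avoids any analysis of fibers of $\widetilde{\gamma_0}$.

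The point you must tighten is the one you flag but do not actually resolve: since $T_l\notin\mathbf{H}$ and $L_1^*$ is not an $\mathbf{H}_0$-module, the only fiber on which the induced map of $j^+j$ is defined is $\widetilde{\gamma_0}\otimes_{\mathbf{H}}\kappa(\mathbf{p})$, and on the summand $L_0^*\otimes_{\mathbf{H}}\kappa(\mathbf{p})$ the operator $T_l-l(l^3+1)$ does \emph{not} act by zero; it acts by zero only on the further quotient $L_0^*\otimes_{\mathbf{H}_0}\kappa(\mathbf{p})$, which is what your faithfulness-plus-Nakayama argument shows to be nonzero. So ``the obstruction vanishes on the fiber, hence the kernel is nonzero'' is not yet a proof. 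The patch is short: the endomorphism of the finite-dimensional $\kappa(\mathbf{p})$-vector space $L_0^*\otimes_{\mathbf{H}}\kappa(\mathbf{p})$ induced by $T_l-l(l^3+1)$ kills the nonzero quotient $L_0^*\otimes_{\mathbf{H}_0}\kappa(\mathbf{p})$, hence is not surjective, hence not injective; taking a nonzero $\bar f_0^*$ in its kernel, the pair $(\bar f_0^*,\,-\bar f_0^*|[\mathcal{U}_01\mathcal{U}_1]/(l+1))$ is a nonzero element of the kernel of the fiber of $j^+j$, and your rank--nullity and right-exactness arguments then give $(\widetilde{\gamma_0}/\widetilde{\gamma_3})\otimes_{\mathbf{H}}\kappa(\mathbf{p})\neq 0$, which descends to $\kappa(\mathbf{p}_M)$ since $\kappa(\mathbf{p})$ is an extension of $\kappa(\mathbf{p}_M)$ (note also that $\mathbf{p}_M$ is the preimage of $\mathbf{p}$ under $\mathbf{H}\rightarrow\mathbf{H}_0$, not literally an intersection, since that map need not be injective). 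With this insertion your argument is correct and is a workable alternative to the paper's localization--Nakayama step.
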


\begin{proof}

Consider the $\mathbf{H}$-module $Q=\widetilde{\gamma_0}/\widetilde{\gamma_3}$. Recall that the composition $j^{+}j$ can be represented by a matrix (acts from right)
\[\begin{pmatrix}
l^3+1 & [\mathcal{U}_0 1 \mathcal{U}_1] \\
[\mathcal{U}_1 1 U_0] & l+1
\end{pmatrix}.\]

Because the $\mathbf{H}_0$ support of $L_0$ and $\widetilde{L_0}$ are equal, then $\mathbf{p}_M$ also lies in the support of $L_0^*$ (over $\mathbf{H}$). So it further lies in $supp_{\mathbf{H}}(\widetilde{\gamma_0})$. If $\mathbf{p}_M \notin supp_{\mathbf{H}}(Q)$, then after localizing to $\mathbf{H}_{\mathbf{p}_M}$, the map $j^+ j$ is a surjective endomorphism for $\widetilde{\gamma_0}_{\mathbf{p}_M}$.

Consider the following $\mathbf{H}$-equivariant surjection \[\rho: \widetilde{\gamma_0}  \twoheadrightarrow L_0^*,\] \[(f_0,f_1)\mapsto -(l+1)f_0+ f_1|[\mathcal{U}_11\mathcal{U}_0].\] After localization, the following composition is  still surjective:
 \[ \widetilde{\gamma_{0}}_{\mathbf{p}_M} \twoheadrightarrow \widetilde{\gamma_{0}}_{\mathbf{p}_M}\twoheadrightarrow L^*_{0,\mathbf{p}_M}.\]

However, this composition is represented by the matrix $\begin{pmatrix}
T_{l}-l(l^3+1) \\
0
\end{pmatrix}$. In particular, it will  imply that $T_{l}-l(l^3+1)$ is an surjective endomorphism for $L^*_{0,\mathbf{p}_M}$. Now we consider the $\mathbf{H}_0$-action on $L_0^*$ and further localize to $\mathbf{p}$, we find that the map $T_{l}-l(l^3+1)$ is a surjective endomorphism for a finitely generated nonzero module $L^*_{0,\mathbf{p}}$. But $\mathbf{p}$ contains $T_l-l(l^3+1)$, such multiplication can't be surjective due to Nakayama's lemma. Therefore $\mathbf{p}_M \in supp_{\mathbf{H}}(Q)$.

The  final proposition of the previous section implies that any prime ideal lying in the support of $\widetilde{\gamma_0}/\widetilde{\gamma_1}$ or $\widetilde{\gamma_2}/\widetilde{\gamma_3}$ is very Einsenstein. Therefore $\mathbf{p}_M$ must lie in the support of $\widetilde{\gamma_1}/\widetilde{\gamma_2}$. Then $\mathbf{p}_M$ also lies in the support of $\frac{M^*}{M^*\cap j(\widetilde{\gamma_0}_{F(X)})}$.

Through the pairing $P_3$, we conclude that $\mathbf{p}_M$  lies in the support of $ker(i^+)$.

\end{proof}

\begin{rem}
The condition that $\mathbf{p}$ contains $T_l-l(l^3+1)$ can be seen as the level raising condition. It is a kind of  $p$-adic analogue of classical level raising condition  in \cite{bellaiche2006u3}.
\end{rem}

\section{Applications}
\label{sectioneigenvariety}

\subsection{intersection points on the eigenvariety} Now we use the $p$-adic level raising results to get some intersection points on the eigenvariety. We will always use the reduced eigenvariety.

Let $\mathcal{E}(\mathcal{V})$ denote the (reduced) eigenvariety with level $\mathcal{V}$, constructed via the Hecke algebra $\mathbb{T}^{S_1}\otimes \mathcal{H}_p^{-}$.  Similarly use Hecke algebra $T^{S_0}\otimes \mathcal{H}_p^{-}$ to construct the (reduced) eigenvariety  $\mathcal{E}(\mathcal{U}_0)$ with level $\mathcal{U}_0$.

For each admissible open (reduced) affinoid $X\hookrightarrow \mathcal{W}$, let $\mathbf{T}$ denote the image of Hecke algebras $\mathbb{T}^{S_1}\otimes \mathcal{H}_p^{-} \otimes O(X)$ inside $End_{O(X)}(\mathcal{S}_X(\mathcal{V},r)^{Q})$. Recall  the space of old forms $im(i)$ inside $\mathcal{S}_X(\mathcal{V},r)^{Q}$, let $\mathbf{T}^{old}$ denote the image of Hecke algebras $\mathbb{T}^{S_1}\otimes \mathcal{H}_p^{-} \otimes O(X)$ inside $End_{O(X)}(im(i))$. Through restriction, we get a natural closed immersion between rigid spaces $Sp(T^{old,red})\hookrightarrow Sp(T^{red})$. For any $Y\hookrightarrow X$ an admissible open sub-affinoid, the module $im(i)$ satisfies base change property. Then we can glue these closed immersions and thus get a closed subspace \[\mathcal{E(V)}^{old}\hookrightarrow \mathcal{E(V)}.\] We refer to lemma 14 of \cite{newton2011geometric} or proposition 4.2 of \cite{newton2016level} for more details. Because $im(i)$ is torsion free over $O(X)$, the same argument during the proof of proposition \ref{injection} shows that $\mathcal{E(V)}^{old}$ is equidimensional and further a union of irreducible components of $\mathcal{E(V)}$. Roughly speaking, it is the Zariski closure of classical points corresponding to old forms. Therefore we may call it \textit{old component}. Similarly, let $\mathbf{T}^{new}$ denote the image of Hecke algebras $\mathbb{T}^{S_1}\otimes \mathcal{H}_p^{-} \otimes O(X)$ inside $ker(i^+)$, the same process ($ker(i^+)$ also satisfies base change property between admissible open affinoids) produces a closed subspace \[\mathcal{E(V)}^{new}\hookrightarrow \mathcal{E(V)}.\] Again the torsion freeness guarantees that it is a union of irreducible components. It is the Zariski closure of classical points corresponding to new forms. We may call it \textit{new component}. The proposition \ref{injection} (injectivity of $i^+ i$) shows that $\mathcal{E(V)}^{new}$ and $\mathcal{E(V)}^{new}$ can't have a common irreducible component. Moreover, as a classical form with level $\mathcal{V}$ is either old at $l$ or new at $l$, apply the Zariski density of classical points, we divide irreducible components of $\mathcal{E(V)}$ into two types.

Through the inclusion $\mathcal{S}_X(\mathcal{U}_0,r)\xrightarrow{[\mathcal{U}_01\mathcal{V}]}\mathcal{S}_X(\mathcal{V},r)$ we have a natural finite map \[\mathcal{E}(\mathcal{U}_0)\longrightarrow \mathcal{E(V)}^{old}.\] Indeed this should be a closed immersion with image being a union of irreducible components. As we can apply the above process to the smaller submodule $\mathcal{S}_X(\mathcal{U}_0,r)$ (as submodule of $\mathcal{S}_X(\mathcal{V},r)$), the resulting rigid space is exactly the image of this finite map. Because this rigid space only omit information of a single Hecke operator $T_l$ inside the usual Hecke algebra for the eigenvariety $\mathcal{E}(\mathcal{U}_0)$, it shouldn't influence too much in the geometry of the eigenvariety. For example, if  we  have certain multiplicity one results, the eigenvalue of  $T_l$ should be determined from other Hecke operators already (although not explicitly). For any point $x_0$ of $\mathcal{S}_X(\mathcal{U}_0,r)$, we say it is \textit{very Eisenstein} if the corresponding maximal  ideal of the Hecke algebra for its image inside $\mathcal{E(V)}$ is very Eisenstein.

Now we can state the main theorem of this paper.

\begin{thm}
\label{main}
Suppose we have a point $\phi$ on $\mathcal{E}(\mathcal{U}_0)$ which is not very Eisenstein and satisfies $T_l(\phi)=l(l^3+1)$. Then the corresponding point inside $\mathcal{E(V)}^{old}$ will also lie in $\mathcal{E(V)}^{new}$.
\end{thm}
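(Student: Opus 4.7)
The plan is to reduce the statement directly to Proposition \ref{propraising}; the theorem is essentially a geometric repackaging of that proposition, and the main work is in translating a point of the reduced eigenvariety into a statement about prime ideals in the relevant Hecke algebras.

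First I would pick an admissible open reduced irreducible affinoid neighborhood $X \hookrightarrow \mathcal{W}$ of the image of $\phi$ under the weight map, together with a slope-truncation polynomial $Q$ adapted to $\phi$, so that $\phi$ arises from the eigenvariety piece built from $L_0 = \mathcal{S}_X(\mathcal{U}_0,r)^Q$. The point $\phi$ then determines a maximal ideal $\mathbf{p}$ of the Hecke algebra $\mathbf{H}_0 \subset \mathrm{End}_{O(X)}(L_0)$, and $\mathbf{p}_M \subset \mathbf{H}$ denotes its pullback under the finite restriction map $\mathbf{H} \to \mathbf{H}_0$ coming from the inclusion $L_0 \hookrightarrow M$ via $[\mathcal{U}_0 1 \mathcal{V}]$. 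The hypothesis $T_l(\phi) = l(l^3+1)$ is literally $T_l - l(l^3+1) \in \mathbf{p}$, and the assumption that $\phi$ is not very Eisenstein is, per the definition given just before the theorem, the statement that $\mathbf{p}_M$ is not a very Eisenstein prime of $\mathbf{H}$.

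With both hypotheses of Proposition \ref{propraising} in hand, I apply it to conclude that $\mathbf{p}_M$ lies in $\mathrm{supp}_{\mathbf{H}}(\ker(i^+))$. Since the eigenvariety component $\mathcal{E}(\mathcal{V})^{new}$ is constructed from the Hecke action on $\ker(i^+)$, this support condition is exactly what it means for the point cut out by $\mathbf{p}_M$ to lie on $\mathcal{E}(\mathcal{V})^{new}$. On the other hand, the image of $\phi$ under the natural finite map $\mathcal{E}(\mathcal{U}_0) \to \mathcal{E}(\mathcal{V})^{old} \hookrightarrow \mathcal{E}(\mathcal{V})$ is precisely the point of $\mathcal{E}(\mathcal{V})$ corresponding to the system of Hecke eigenvalues extending that of $\phi$; by the construction of $\mathbf{H} \to \mathbf{H}_0$, this is the point cut out by $\mathbf{p}_M$. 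Hence this single point lies in both $\mathcal{E}(\mathcal{V})^{old}$ and $\mathcal{E}(\mathcal{V})^{new}$.

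The hard part is really book-keeping rather than new mathematics: I must check that shrinking $X$ and varying $Q$ do not affect which irreducible components of $\mathcal{E}(\mathcal{V})$ the resulting point belongs to (so that the notion of "corresponding point" is unambiguous), and that the geometric definition of "very Eisenstein point" given just before the theorem agrees with the algebraic definition applied to $\mathbf{p}_M$. Both checks are essentially forced by the definitions and by base-change compatibility of the slope decomposition recalled in section \ref{sectionslope}, so once these identifications are spelled out carefully the theorem follows immediately from Proposition \ref{propraising}.
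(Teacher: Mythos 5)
Your proposal is correct and follows essentially the same route as the paper: translate $\phi$ into a maximal ideal of the Hecke algebra acting on $\mathcal{S}_X(\mathcal{U}_0,r)^Q$ for a suitable affinoid $X$ and slope datum $Q$, pull back along the finite map to $\mathbf{H}$, apply Proposition \ref{propraising}, and read off membership in $\mathcal{E}(\mathcal{V})^{new}$ from the construction of that component out of $\ker(i^+)$. The book-keeping points you flag (base-change compatibility and matching the geometric and algebraic notions of very Eisenstein) are exactly the identifications the paper relies on implicitly.
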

\begin{proof}
Let $\widetilde{\phi}$ denote the resulting point in $\mathcal{E(V)}^{old}$. From the construction of eigenvarieties, there exists an admissible open affinoid $X\hookrightarrow W$ such that the point $\phi$ corresponds to  a maximal ideal $\mathfrak{R}_{\phi}$ of $\mathbb{T}^{S_0}\otimes \mathcal{H}_p^{-}\otimes O(X)$ that lies in the support of $\mathcal{S}_X(\mathcal{U}_0,r)^{Q}$. Now we can apply the proposition \ref{propraising} to raise the level. The resulting maximal ideal $\mathfrak{R}_{\widetilde{\phi}}$ for the Hecke algebra $\mathbb{T}^{S_1}\otimes \mathcal{H}_p^{-}\otimes O(X)$ lies in the support of $ker(i^+)$. Therefore $\widetilde{\phi}$ also lies in $\mathcal{E(V)}^{new}$.
\end{proof}

\begin{rem}

Suppose the vanishing locus of the Hecke operator $T_{l}-l(l^3+1)$ is non-empty, then it is a codimension one (thus dimension two) space inside $\mathcal{E}(\mathcal{U}_0)$. During the proof of the proposition \ref{injection}, each classical point inside this vanishing locus is very Eisenstein (in particular has central weight). After cutting out the (at most one dimensional) closed subspace over central weights inside  this locus, the resulting space is still non-empty and two dimensional. This space consists of non-classical points and lies in the intersection of old components and new components by this theorem.

\end{rem}

\subsection{construction of such points and further development}

Now we discuss some further ideas about such points.

As we wrote in the remark, the key step is to find suitable $\mathcal{E}(\mathcal{U}_0)$  such that the vanishing locus of $T_{l}-l(l^3+1)$ is non-empty. In \cite{newton2016level} (see section 4.2 of that paper), James Newton constructed similar intersection points on the eigenvariety for definite quaternion algebra $D$ over $\Q$ via some explicit computations (via Sage). See proposition 4.9 of that paper. His idea is to work with the usual $GL(2)$ and transfer to $D$. Then he turned to the usual $p$-adic ordinary modular forms. He constructed certain nice Hida family and computed the ordinary Hecke algebra explicitly. Then he constructed such intersection points satisfying the level raising condition. Indeed that point lies in the intersection between two Hida families. He further mentioned that example 5.3.2 of \cite{emerton2006variation} is also an example of such intersection points between two Hida families. Although there are many development about generalizations of usual Hida theory to definite unitary groups $U(n)$, such explicit computation is still more difficult than $GL(2)$ cases. For example, in the case of (Hilbert) modular forms, there is an explicit duality between ordinary Hecke algebra and the space of ordinary cusp forms via Fourier coefficients. But such nice result doesn't exist for definite unitary groups ($n>2$). And many other problems make the study of ordinary Hecke algebra for definite unitary groups much harder.

Instead we can try to apply $p$-adic Langlands functoriality to construct such points. As $GL(2)$ is  also closely related with $U(2)$, we can first transfer James Newton's result to the eigenvariety of definite $U(2)$. The methods of  this paper obviously apply to $U(2)$ setting. Then the resulting points on this eigenvariety should relate to degenerate Satake parameter for $U(2)(\Q_l)$. The Bruhat-Tits tree for $U(2)(\Q_l)$ is a homogeneous tree (like the $GL(2,\Q_l)$ case) and the degenerate Satake parameter is $l^{\pm1}$. If we have the desired $p$-adic symmetric square functoriality, then under such functoriality map, the resulting  point on $\mathcal{E}(\mathcal{U}_0)$ for $G=U(3)$ is a desired point satisfying the condition in theorem \ref{main}. The symmetric power functoriality for classical forms is known due to \cite{nt2021symmetric1} and \cite{nt2021symmetric2}. But as our intersection point is non-classical. We can't apply their results directly. We need to do $p$-adic interpolation of such symmetric square functoriality to get a map from the eigenvariety of $U(2)$ to the eigenvariety of $U(3)$. It is reasonable to expect such a map. I'm trying to apply the method of David Hansen (see \cite{hansen2017universal}) to get such $p$-adic symmetric square functoriality. Indeed, Hansen already showed a kind of symmetric square functoriality between eigenvariety of $GL(2)$ and $GL(3)$ (see section 5.4 of that paper), which is very close to our setting. I hope to finish these details later.

What's more, I'm also considering using the method of \cite{hansen2017universal} to get a kind of $p$-adic Jacquet-Langlands functoriality between definite unitary groups and indefinite unitary groups. In some cases we can associate PEL type Shimura varieties to the later group. Then we have more geometric tools to study its overconvergent automorphic forms. For example, Fabrizio Andreatta, Adrian Iovita and Vincent Pilloni developed such $p$-adic theory for Siegel Shimura varieties in \cite{andreatta2015p}. Later Xu Shen generalized their construction to certain compact unitary Shimura varieties in \cite{shen2016p}. On the other hand,   Christopher Birkbeck showed such a $p$-adic Jacquet-Langlands functoriality between eigenvarieties of Hilbert modular forms and definite quaternion algebras over a totally real field in \cite{birkbeck2019jacquet}. I hope to get similar generalizations between unitary groups. After that we may apply results in this paper to study overconvergent automorphic forms on unitary Shimura varieties. For example, Newton applied his $p$-adic level raising results to study certain local global compatibility problems in \cite{newton2015towards}.

Finally,  we discuss further generalization to other groups. As we mentioned after the abelian Ihara lemma, that theorem may be extended to reductive group $G$ over $\Q$ with $G(\R)$ being compact and $G^{der}$ being simply connected. In particular, this  includes all definite unitary groups over any totally real fields. If further there exists a prime $l \neq p$ such that $G(\Q_l)$ has reduced rank one,  then the proof of level raising results should also works. In particular, this applies to all definite $U(3)$ over any totally real fields (like \cite{bellaiche2006u3}). The Bruhat-Tits tree for such $G(\Q_l)$ is either homogeneous or bi-homogeneous. And their unramified principal series is completed classified by \cite{choucroun1994analyse} (see chapter 2). Therefore they are not complicated than $U(3)(\Q_l)$. And our argument works in that general setting. Moreover, the rank one reductive group over a non-Archimedean field is classified by \cite{carbone2001classification}.

However, the higher rank (at $l$) cases is  much more difficult. In fact, even if  in  the classical (modulo $p$) setting, such generalization of Ihara lemma is still open.  Clozel, Harris and Taylor proposed a conjecture about generalizations of Ihara lemma to definite unitary groups $U(n)$ over \textbf{split} primes (thus locally isomorphic to $GL_n$) in \cite{clozel2008automorphy}. This is still quite challenging when $n>2$.  And the $p$-adic level raising problem seems harder. To study such intersection points, we first need to define suitable components. When the local rank is higher, we have to deal with more kinds of components (instead of just \textit{old} or \textit{new}). The local situation in next section already shows such thing. Thus the first task is to define each kind of locus suitably and we have to guarantee that each resulting locus is a union of irreducible  components (which is not obvious). After such constructions, we need to find generalizations of  $p$-adic Ihara lemma, which is more difficult.

\subsection{local analogues}
\label{localpicture}
Finally we discuss a local analogue of such global intersection behaviour. The intuition is to relate degenerate principal series  to certain intersection points on some moduli spaces. In local situation, we will consider the moduli space of tame $L$-parameters. Then the local picture is  easier than the global setting. Along this discussion, we will also see that our definition of old forms and  new forms is more natural than \cite{bellaiche2006u3}.

We first follow section 2 of \cite{hellmann2023derived} to introduce this moduli space. Still let $l \neq p$ be two  primes. Consider the  reductive group over $\Q_l$ (indeed $\Q_l$ can be replaced by any $l$-adic field), $G(\Q_l)=GL(n,\Q_l)$. Its dual group is $GL_n$ and we can identify the $L$-dual group just with this dual group $GL_n$ (as the Galois action is trivial). Let $gl_n$ denote the Lie algebra for $GL_n$ and let $C$ denote an algebraic closed field with characteristic zero (like $\mathbb{C}$ or $\overline{\Q_p}$). Over the field $C$, let $X_{\widehat{G}}$ denote the moduli scheme representing the functor \[R\longrightarrow \{(\phi,N)\in (GL_n\times gl_n)(R)|Ad(\phi)(N)=lN.\}\] on the category of $C$-algebras.

See \cite{hellmann2023derived} (about split reductive groups) for more details and \cite{dat2020moduli} (about more general reductive groups)for a vast generalization of such moduli spaces.

By proposition 2.1 of \cite{hellmann2023derived}, the irreducible components of $X_{\widehat{G}}$ are in bijection with the set of $GL_n$-orbits on the nilpotent cone of $gl_n$. For such an orbit $[N]$, let $X_{\widehat{G}}^{[N]}$ denote the corresponding irreducible components. If $N=0$, we also denote the (unramified) irreducible component as $X_{\widehat{G}}^{un}$.

For any Satake parameter $s \in GL_n(C)$ (more precisely, we pick up a representative inside the $GL_n(C)$-conjugacy class), let $I(s)$ denote the corresponding principal series (via normalized parabolic induction). Recall that $s$ is degenerate if and only if $I(s)$ is reducible. And we can always associate such Satake parameter to a point inside $X_{\widehat{G}}^{un}$ by \[s\mapsto (s,0).\]

We have the following observation:

\begin{prop}
If the Satake parameter $s$ is degenerate, then the corresponding point $(s,0)$ inside $X_{\widehat{G}}^{un}$ will lie at some other irreducible components.

\end{prop}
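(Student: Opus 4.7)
The plan is to produce, for each degenerate Satake parameter, a one-parameter family of points of $X_{\widehat{G}}$ with constant semisimple part $s$ and varying nonzero nilpotent part, and then take the limit $t\to 0$ to land at $(s,0)$ while staying inside a non-unramified component.

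First I would replace $s$ by a semisimple (indeed diagonalisable) representative of its $GL_n(C)$-conjugacy class, say $s=\mathrm{diag}(s_1,\ldots,s_n)$. This is harmless because each irreducible component $X_{\widehat{G}}^{[N]}$ is stable under simultaneous $GL_n$-conjugation on both factors, so checking membership of $(s,0)$ in some $X_{\widehat{G}}^{[N]}$ is invariant under the choice of representative. Next I would invoke the standard Bernstein--Zelevinsky reducibility criterion for unramified principal series of $GL_n(\Q_l)$: $I(s_1,\ldots,s_n)$ is reducible if and only if there exist indices $i\neq j$ with $s_i/s_j=l^{\pm 1}$. By swapping $i$ and $j$ if necessary, the degeneracy hypothesis gives a pair $(i,j)$ with $s_i/s_j=l$.

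Given such a pair, set $N=E_{ij}$, the elementary matrix. A direct computation shows $\mathrm{Ad}(s)(E_{ij})=(s_i/s_j)E_{ij}=lE_{ij}$, so the map
\[
\iota:\mathbb{A}^1_C\longrightarrow X_{\widehat{G}},\qquad t\longmapsto (s,tN),
\]
is well-defined (the equation $\mathrm{Ad}(\phi)(N')=lN'$ is linear in $N'$). For $t\neq 0$ the element $tN$ is conjugate to $N$ via $\mathrm{diag}(1,\ldots,1,t,1,\ldots,1)$ (with $t$ in the $i$-th slot), so $[tN]=[N]$ is the minimal nonzero nilpotent orbit. Hence $\iota(\mathbb{G}_m)\subset X_{\widehat{G}}^{[N]}$.

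To conclude, I would use that $X_{\widehat{G}}^{[N]}$ is a closed subscheme of $X_{\widehat{G}}$ (as an irreducible component in the sense of the cited Proposition~2.1 of \cite{hellmann2023derived}). Since the affine line $\mathbb{A}^1$ is irreducible and its image under $\iota$ meets $X_{\widehat{G}}^{[N]}$ on the dense open $\mathbb{G}_m$, the whole image lies in $X_{\widehat{G}}^{[N]}$; in particular $\iota(0)=(s,0)\in X_{\widehat{G}}^{[N]}$. Since $[N]\neq [0]$, this is a component distinct from $X_{\widehat{G}}^{un}$, which is the claim. The only mildly subtle step is the appeal to the reducibility criterion (which is classical), so no serious obstacle should arise; one may optionally record the sharper statement that the component witnessed by the construction is always the minimal nilpotent component $X_{\widehat{G}}^{[E_{12}]}$, but finer statements about which components contain $(s,0)$ would require analysing all pairs $(i,j)$ with $s_i/s_j=l$ simultaneously.
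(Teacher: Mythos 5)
Your proof is correct, and it takes a genuinely different (and more elementary) route than the paper. You produce the nonzero monodromy by hand: diagonalise $s$, invoke the classical reducibility criterion for unramified principal series of $GL_n(\Q_l)$ (some ratio $s_i/s_j=l^{\pm1}$), and set $N=E_{ij}$ so that $\mathrm{Ad}(s)(E_{ij})=lE_{ij}$; the degeneration $t\mapsto(s,tN)$ then places $(s,0)$ in the closed component $X_{\widehat{G}}^{[N]}$, exactly as in the paper's limit argument. The paper instead passes through the local Langlands correspondence: it takes the ramified Jordan--H\"older factors $\pi_j$ of $I(s)$, uses compatibility of LLC with parabolic induction (citing Scholze, and noting this is only conjectural for general groups) to see their parameters have semisimple part $s$ and nonzero monodromy $N_j$, and then runs the same one-parameter degeneration. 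What your argument buys is self-containedness and explicitness for $GL_n$ --- no LLC input, and an explicit witness, the minimal nilpotent component. What the paper's route buys is the representation-theoretic bookkeeping: it exhibits a component containing $(s,0)$ for \emph{each} ramified constituent of $I(s)$, and it is phrased so as to make transparent why the statement fails for groups like $SL(2)$ or $U(3)$, where a degenerate Satake parameter can have ramified constituents in its unramified $L$-packet whose parameters still have zero monodromy --- the point of the subsequent discussion in that section. Both proofs rest on the same two facts (any point with monodromy in the orbit $[N]$ lies in $X_{\widehat{G}}^{[N]}$, and components are closed), and your handling of the limit via irreducibility of $\mathbb{A}^1$ is sound.
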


\begin{proof}
If $s$ is  degenerate, then $I(s)$ is reducible. Suppose its Jordan-Holder factors are $\{\pi_{j}|0\leq j \leq m\}$. And we use  $\pi_0$ to denote the unique unramified representation. Under the local Langlands correspondence, $\pi_0$ will correspond to $GL_n$-orbit of $(s,0)$. Other $\pi_j$ will correspond to certain $GL_n$ orbit (pick up a representative $(s_j,N_j)$) similarly.

By the parabolic induction functoriality, each $s_j$ is conjugated to $s$. This functoriality is well known for $GL(n)$, for example see theorem 1.2(b) of \cite{scholze2013local}. For general reductive groups, this functoriality is expected but not fully established, see conjecture 5.2.2 of \cite{haines2014stable}. Then we can assume $s_j=s$ for each $j$.

For any $j\geq 1$, as $\pi_j$ is ramified, the monodromy $N_j$ is nonzero. The irreducible component $X_{\widehat{G}}^{[j]}=X_{\widehat{G}}^{[N_j]}$ is different from $X_{\widehat{G}}^{un}$. For each positive $j$, there exists a one parameter subgroup \[\psi_j: GL_1 \longrightarrow GL_n,\] such that \[Ad(\psi_j(t))(N_j)=tN_j.\] Then consider the following map $\rho_j: GL_1 \longrightarrow X_{\widehat{G}}$ sending $t$ to $(s,tN_j)$. As each $tN_j$ lies in the same $GL_n$ orbit of $N_j$, the point $\rho_j(t)$ still lies in the irreducible component $X_{\widehat{G}}^{[j]}$. Then the closure of its image $\overline{im(i)}$ also belongs to $X_{\widehat{G}}^{[j]}$. From the construction, obviously the point $(s,0)$ lies in such closure. So $(s,0)$ also lies in the irreducible component $X_{\widehat{G}}^{[j]}$ for each positive $j$.

\end{proof}

Therefore, in the case of $GL(n)$, the local picture is very nice.  However, this conclusion won't always hold. For other reductive groups, there exists degenerate Satake parameters which even \textbf{don't} contribute to intersection points. The main reason is that $GL(n)$ has only one conjugacy class of maximal open subgroups while other groups may have more such conjugacy classes. For that $G$, its unramified $L$-packet may have more than one element.

For instance, such phenomena  already happens for $SL(2,\Q_l)$. Its dual group is $PGL_2$ (still with trivial Galois action), and we can define the moduli space $X_{\widehat{G}}$ similarly. The group $SL(2,\Q_l)$ has two conjugacy class of maximal open compact subgroups. One is $K_0=SL(2,\Z_l)$ and the other one is $K_1=diag(l,1)K_0diag(l^{-1},1)$. And there is a degenerate Satake parameter $s=diag(1,1)$, $I(s)$ has two Jordan-Holder factor $\pi_0$ and $\pi_1$, while $\pi_0$ is unramified respect to $K_0$ and $\pi_1$ is unramified respect to $K_1$. Then under the local Langlands correspondence, they should correspond to the same  orbit represented by $(s,0)$ inside $X_{\widehat{G}}$. Thus this unramified $L$-packet has two elements and through direct computation, this point $(s,0)$ is not an intersection point.

In some sense, the group $U(3,\Q_l)$ is a similar example. Its dual group is $GL_3$, but with a non-trivial Galois action. Its Langlands dual group is $GL_3 \rtimes \{\pm1\}$. The construction of \cite{hellmann2023derived} is for split reductive groups and we should use the general construction in \cite{dat2020moduli}. Here we only mention some results about principal series of $U(3,\Q_l)$. Section 3.6 of \cite{bellaiche2006u3} lists a classification. They use a complex number $\alpha$ to denote the Satake parameter. Under their notations, there are three kinds of unramified principal series:

$ \bullet$ If $\alpha \neq l^{\pm2}$ and $\alpha \neq -l^{\pm1}$, then the principal series $I(s)$ is irreducible;

$\bullet$ If $\alpha = l^{\pm 2}$ ($\Leftrightarrow$the eigenvalue of $T_l$ is $l(l^3+1)$), then $I(s)$ has two Jordan-Holder factors, one is a character and another one is the Steinberg representation;

$\bullet$ If $\alpha= - l^{\pm1}$ ($\Leftrightarrow$the eigenvalue of $T_l$ is $-(l^3+1)$), then $I(s)$ has two Jordan-Holder factors, each has a nonzero invariant vector under certain maximal open compact subgroups.

The second situation gives another motivation to study the abelian Ihara lemma: one dimensional unramified representation ('\textit{old}') is closely related with the Steinberg representation ('\textit{new}') at such case. And we expect such point to be intersection points. But the degenerate Satake parameter in the third situation won't contribute to intersection points. In \cite{bellaiche2006u3}, they use $L_0 \oplus L_0 \longrightarrow M$ to define level raising maps, and the level changing matrix (proposition 3.5.4 of \cite{bellaiche2006u3}) has determinant $(l^4+l-T_l)(l^3+1+T_l)$. Then both $l(l^3+1)$ and $-(l^3+1)$ are 'singular' eigenvalue. However, only eigenvalue of the form $l(l^3+1)$ corresponds to intersection points in geometry, justifying our choice  of $L_0 \oplus L_1$ instead of $L_0 \oplus L_0$.

Finally we remark that even if for the simplest example $GL(2,\Q_l)$, not all intersection points come from degenerate Satake parameters. The reason is that under the local Langlands correspondence, the resulting Weil-Deligne representation is Frobenius semisimple. The Satake  parameters only runs over semisimple elements while intersection points may not satisfy such property. The geometry of $X_{\widehat{G}}$ is more delicate than the study of degenerate principal series. And it is more natural to study the quotient stack $[X_{\widehat{G}}/\widehat{G}]$ (in particular for categorical local Langlands), which is more complicated.

\newpage

\bibliographystyle{plain}
\bibliography{reference}

\end{document}